\numberwithin{equation}{section}
\title[A large sieve inequality]{A large sieve inequality of Elliott-Montgomery-Vaughan type for Maass forms on $GL(n,\mathbb{R})$ with applications}
\author[Y.-K. Lau, M. H. Ng, E. Royer and Y. Wang]{Yuk-Kam Lau, Ming Ho Ng, Emmanuel Royer and Yingnan Wang}
\address[	yklau@maths.hku.hk]{{\sc Yuk-Kam Lau} Department of Mathematics, The University of Hong Kong, Pokfulam Road, Honk Kong P. R. China}
\address[	mhng@math.cuhk.edu.hk]{{\sc Ming Ho Ng} Department of Mathematics, The Chinese University of Hong Kong, Shatin, Honk Kong P. R. China}
\address[	emmanuel.royer@math.cnrs.fr]{{\sc Emmanuel Royer} Université Clermont Auvergne, CNRS, LMBP, F-63000 Clermont-Ferrand, France}
\address[	ynwang@szu.edu.cn]{{\sc Yingnan Wang} Shenzen Key Laboratory of Advanced Machine Learning and Applications, College of Mathematics and Statistics, Shenzen University, Shenzen, Guangdong 518060, P.R. China}
\subjclass{11F12}
\keywords{ Sign change, Linnik's problem, Montgomery-Vaughan conjecture, Large sieve inequality, Hecke eigenvalues, Automorphic forms for $GL(n)$}
\newtheorem{theorem}{Theorem}[section]
\newtheorem{lemma}{Lemma}[section]
\newtheorem{corollary}{Corollary}[section]
\newtheorem{remark}{Remark}[section]
\newtheorem{proposition}{Proposition}[section]
\newcommand*{\C}{\mathbb{C}}
\newcommand*{\ds}{\displaystyle}
\newcommand*{\ic}{\mathtt{i}}
\newcommand*{\N}{\mathbb{N}}
\newcommand*{\R}{\mathbb{R}}
\newcommand{\ul}{\underline}
\newcommand*{\BIBpaper}[1]{{\rm #1.}} 
\newcommand*{\BIBbook}[1]{{\sl #1.}}
\newcommand*{\BIBjournal}[1]{{\sl #1}}
\numberwithin{equation}{section}
\begin{document}
\begin{abstract}
In this paper, we establish a large sieve inequality of Elliott-Montgomery-Vaughan type for Maass forms on $GL(n,\mathbb{R})$ and explore three applications.
\end{abstract}
\maketitle
\begin{center}
First published in Revista Matemática Iberoamericana, 2021, published by the European Mathematical Society. Doi 10.4171/rmi/1238
\end{center}
\section{Introduction}
Elliott \cite{EL}, \cite{EL1}, and Montgomery and Vaughan \cite{M-V} independently developed some sort of large sieve inequalities to study  Linnik's problem, which may yield a more general result than the classical Vinogradov's result, cf. \cite{L-W4}. This device, known as the large sieve inequalities of Elliott-Montgomery-Vaughan (EMV) type, was generalized to the setting of primitive holomorphic cusp forms on $GL(2,\mathbb{R})$ and applied to obtain some statistical results on Hecke eigenvalues of primitive holomorphic cusp forms in \cite{lwu}. Later, Wang \cite{wa} generalized the results to the case of Maass forms on $GL(2,\mathbb{R})$.

It is natural  to ask for a generalization of large sieve inequalities of EMV type to Maass forms on $GL(n,\mathbb{R})\ (n\ge3)$. There are two main difficulties: the first one is that for $n\ge3$ the Hecke relations for $GL(n,\mathbb{R})$ are much more complicated than those of $GL(2,\mathbb{R})$, and the trace formula for $GL(n,\mathbb{R})$ with $n\ge3$ is not as simple as the trace formula (say Kuznetsov's and Petersson's trace formulas) on $GL(2,\mathbb{R})$. Recently, Xiao and Xu \cite{xx}, using Kuznetsov's trace formula and Hecke's relations, made a breakthrough and obtained a large sieve inequality of EMV type to Maass forms on $GL(3,\mathbb{R})$. Moreover, they also applied their large sieve inequality to get a statistical result of sign changes on the Hecke eigenvalues for $GL(3,\mathbb{R})$.

In this paper, we generalize the large sieve inequalities of EMV type to Maass forms on $GL(n,\mathbb{R})$ for all $n\ge3$, and the result is comparable to the case of automorphic forms on $GL(2,\mathbb{R})$ (see \cite{lwu, wa}). Our main tool is the automorphic Plancherel density theorem - a recent great progress due to Matz and Templier \cite{MT}. We remark the  use of properties of (degenerated) Schur's polynomials instead of Hecke's relations to avoid the complicated calculations as  in \cite{xx}. More precisely the (degenerated) Schur polynomial is employed to evaluate the main term when applying the truncated trace formula \cite[Corollary 3.3]{lnw} since the main term in \cite[Corollary 3.3]{lnw} is expressed in the form of orbital integral involving the (degenerated) Schur polynomial by the work of Matz and Templier \cite{MT}. Moreover, we apply our large sieve inequality - Theorem \ref{main theorem} on the $GL(n,\mathbb{R})$ analogue of Linnik's problem, the sign change problems, and the Montgomery-Vaughan conjecture.

Let $\mathcal{H}^\natural=\{\phi_j\}$ be an orthogonal basis consisting of Hecke-Maass cusp forms for $SL(n,\mathbb{R})$. Each $\phi_j$ is associated with a Langlands parameter $\mu_{j}\in \mathfrak{a}_\C^* /W$ where $\mathfrak{a}_\C^* \cong \{ \ul{z}\in \C^n: \sum_i z_i = 0\}$ and $W$ is the Weyl group of $GL(n, \R)$. For $t\ge 1$, we let
\begin{eqnarray}\label{eqHd}
\ds\mathcal{H}_t :=\ds \{ \phi_j\in \mathcal{H}^\natural:  \  \|\mu_{j}\|_2\le t, \ \mu_{j} \in \ic \mathfrak{a}^* \}
\end{eqnarray}
where $\|\cdot\|_2$ is the standard Euclidean norm, and $\ic \mathfrak{a}^*\subset \mathfrak{a}_\C^*$ is isomorphic to $\ic\R^{n-1}$. It is known that $|\mathcal{H}_t|\asymp t^d$ with $d=n(n+1)/2$.

Let $A_{\phi}(m_1,m_2,\ldots,m_{n-1})$ be the Fourier coefficient of $\phi\in\mathcal{H}_t$. In this paper, we normalize each $\phi\in\mathcal{H}_t$ such that
$$A_{\phi}(1,1,\ldots,1)=1.$$
It is well known that
$$A_{\phi}(m_1,m_2,\ldots,m_{n-1})=\overline{A_{\phi}(m_{n-1},m_{n-2},\ldots,m_{1})}.$$
Moreover, for any $\mathbf{\kappa}= (\kappa_1,\cdots, \kappa_{n-1})\in \N_0^{n-1}$ and any prime $p$,
\begin{align}\label{eqApk}
A_\phi(p^{\mathbf{\kappa}})
:= &\ A_\phi(p^{\kappa_1}, p^{\kappa_2}, \cdots, p^{\kappa_{n-1}}) \\
= &\ S_{\mathbf{\kappa}} (\alpha_{\phi,1}(p),\alpha_{\phi,2}(p),\cdots, \alpha_{\phi,n}(p)) \nonumber
\end{align}
where $S_{\mathbf{\kappa}}$ is the (degenerate) Schur polynomial (see Section 2 for definition and refer to \cite{go} or \cite{lw} for a detailed exposition) and $\alpha_\phi(p):= (\alpha_{\phi,1}(p),\alpha_{\phi,2}(p),\cdots, \alpha_{\phi,n}(p))$ is (a representative of) the Satake parameter associated to  $\phi$ at $p$. Every Satake parameter $\alpha_\phi(p)$ satisfies $\prod_{i=1}^n \alpha_{\phi,i}(p)=1$ and
\begin{equation*}
\alpha_{\phi,1}(p)+ \cdots +{\alpha_{\phi,n}(p)}=A_\phi(p,1,\ldots,1).
\end{equation*}
Put $\mathbf{\kappa}^\iota = (\kappa_{n-1}, \cdots, \kappa_1)$ if $\mathbf{\kappa}= (\kappa_1,\cdots, \kappa_{n-1})$. Then we have
\begin{equation*}
A_\phi(p^{\mathbf{\kappa}^\iota}) = A_\phi(p^{\kappa_{n-1}}, \cdots, p^{\kappa_1})=  \overline{A_\phi(p^{\mathbf{\kappa}})},
\end{equation*}
and $A_\phi(p^{\mathbf{\kappa}})\in \R$ if $\mathbf{\kappa}= \mathbf{\kappa}^\iota$.

Notation:  For $\mathbf{\kappa}=(\kappa_1,\ldots,\kappa_{n-1})\in \mathbb{N}_0^{n-1}$, we denote
$\|\mathbf{\kappa}\|:=\sum_{j=1}^{n-1} (n-j) \kappa_j$ and $|\mathbf{\kappa}|=\sum_{j=1}^{n-1}\kappa_j $.

\begin{theorem}\label{main theorem}
Let $0\neq \mathbf{\kappa}=(\kappa_1,\ldots,\kappa_{n-1})\in \mathbb{N}_0^{n-1}$.  Let $j\ge1$ be any integer and let $\left\{b_p\right\}_p$ be a sequence of complex numbers indexed by prime numbers such that $|b_p|\le B$ for some constant $B>0$ and for all primes $p$. Then
\begin{align}\label{large sieve}
&\frac1{|\mathcal{H}_t|}\sum_{\phi
\in\mathcal{H}_t}\left|\sum_{P<p\le Q }b_p\frac{A_\phi(p^{\kappa_1},\ldots,p^{\kappa_{n-1}})}{p}\right|^{2j}
\nonumber\\
&
\ll t^{-1/2}\left(\frac{BC_{\mathbf{\kappa}}Q^{L\|\mathbf{\kappa}\|}}{\log P}\right)^{2j}+
\left(\frac{(BC_{\mathbf{\kappa}})^2 j}{P\log P}\right)^j\left\{1+\left(\frac{40j\log P}{P}\right)^{j/3}\right\}
\end{align}
holds uniformly for
$$
B>0,\qquad j\ge1,\qquad 2\le P< Q\le 2P,
$$
where $L$ is a positive constant, $1\le C_{\mathbf{\kappa}}:=10 (1+|\mathbf{\kappa}|)^{n^2-n}$
and the implied constant depends on $\mathbf{\kappa}$ only.
\end{theorem}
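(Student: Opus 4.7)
The plan is to adapt the moment method used in the $GL(2)$ and $GL(3)$ treatments \cite{wa,xx}, replacing the Petersson/Kuznetsov trace formulas by the truncated trace formula \cite[Corollary 3.3]{lnw} and extracting the main term via the Matz--Templier Plancherel density theorem \cite{MT}. The first step is to expand the $2j$-th power: for $\mathbf{p}=(p_1,\ldots,p_j)$ and $\mathbf{q}=(q_1,\ldots,q_j)$ running over $j$-tuples of primes in $(P,Q]$,
\[
\left|\sum_{P<p\le Q} b_p\frac{A_\phi(p^{\mathbf{\kappa}})}{p}\right|^{2j}
= \sum_{\mathbf{p},\mathbf{q}} \frac{\prod_i b_{p_i}\overline{b_{q_i}}}{\prod_i p_i q_i} \prod_{i=1}^j A_\phi(p_i^{\mathbf{\kappa}})A_\phi(q_i^{\mathbf{\kappa}^\iota}).
\]
Multiplicativity of the Hecke eigenvalues at coprime indices together with \eqref{eqApk} reduces the product of Hecke values to a single local factor $\prod_p S_{\mathbf{\kappa}}(\alpha_\phi(p))^{a_p}S_{\mathbf{\kappa}^\iota}(\alpha_\phi(p))^{b_p}$, where $a_p,b_p$ record the multiplicities of $p$ in $\mathbf{p},\mathbf{q}$. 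Applying Littlewood--Richardson expands each local factor as a short nonnegative combination $\sum_\nu c_\nu S_\nu(\alpha_\phi(p))$ of Schur polynomials indexed by partitions of length $\le n-1$, with total coefficient mass uniformly bounded in terms of $C_{\mathbf{\kappa}}$. Reassembling by multiplicativity converts the $\phi$-dependent expression into a short sum $\sum_{\mathbf{m}} c_{\mathbf{m}}(\mathbf{p},\mathbf{q}) A_\phi(\mathbf{m})$ for compound indices $\mathbf{m}$ supported on primes in $(P,Q]$.

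Second, one averages over $\phi\in\mathcal{H}_t$. Applying \cite[Corollary 3.3]{lnw} to each $A_\phi(\mathbf{m})$ gives
\[
\frac{1}{|\mathcal{H}_t|}\sum_{\phi\in\mathcal{H}_t} A_\phi(\mathbf{m})
= \mathcal{M}_t(\mathbf{m}) + O\bigl(t^{-1/2}\mathcal{E}(\mathbf{m})\bigr),
\]
where $\mathcal{M}_t(\mathbf{m})$ is the orbital integral of $S_{\mathbf{m}}$ against the Matz--Templier Plancherel measure and $\mathcal{E}(\mathbf{m})$ is polynomial in the entries of $\mathbf{m}$ with degree controlled by $\|\cdot\|$; in particular $\mathcal{E}(\mathbf{m})\ll Q^{L\|\mathbf{\kappa}\|}$ whenever $\mathbf{m}$ arises from a pair $(\mathbf{p},\mathbf{q})\in (P,Q]^{2j}$. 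Character orthogonality of the Schur polynomials on the unitary axis forces $\mathcal{M}_t(\mathbf{m})$ to vanish unless the Littlewood--Richardson expansion at every prime contains the trivial partition; this pins down the EMV diagonal as the set of configurations in which $\mathbf{q}$ is a permutation of $\mathbf{p}$, modulo higher-order collision terms.

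Third, summing over $(\mathbf{p},\mathbf{q})$ assembles the two terms in \eqref{large sieve}. The error contribution produces $t^{-1/2}(BC_{\mathbf{\kappa}}Q^{L\|\mathbf{\kappa}\|}/\log P)^{2j}$ upon bounding each coefficient trivially by $(BC_{\mathbf{\kappa}})^{2j}$, counting $\le \pi(Q)^{2j}$ tuples, and using $\sum_{P<p\le Q}1/p\ll 1/\log P$. The diagonal contribution is controlled by counting matchings: in the square-free case, the $j!\le j^j$ matchings each contribute $((BC_{\mathbf{\kappa}})^2/(P\log P))^j$ via $\sum_p 1/p^2\ll 1/(P\log P)$, yielding $((BC_{\mathbf{\kappa}})^2j/(P\log P))^j$. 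Collisions, where a prime repeats among $\mathbf{p}$ or $\mathbf{q}$, produce the correction $(40j\log P/P)^{j/3}$ through a Stirling-plus-PNT estimate on the number of non-squarefree pairings, exactly as in the $GL(2)$ and $GL(3)$ arguments of \cite{wa,xx}.

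The main obstacle will be Step 2: pinning down when $\mathcal{M}_t(\mathbf{m})$ vanishes (to identify the EMV diagonal) and controlling $\mathcal{E}(\mathbf{m})$ uniformly in $\mathbf{m}$. Orthogonality of Schur characters on the compact form of $GL(n)$ gives the vanishing criterion cleanly under a Ramanujan assumption, but the Matz--Templier spectrum is only tempered on average, so the non-tempered contribution must be absorbed using their polynomial bounds on the Plancherel density, which is precisely what forces the factor $Q^{L\|\mathbf{\kappa}\|}$ rather than a polynomial in $\|\mathbf{m}\|$ alone. Verifying the explicit constant $C_{\mathbf{\kappa}}=10(1+|\mathbf{\kappa}|)^{n^2-n}$ requires a uniform bound on the total mass of the Littlewood--Richardson coefficients in $S_{\mathbf{\kappa}}S_{\mathbf{\kappa}^\iota}=\sum_\nu c_\nu S_\nu$, which should follow from Weyl's dimension formula applied to irreducible representations of $GL(n,\C)$.
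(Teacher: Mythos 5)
Your overall architecture (expand the $2j$-th power, apply the truncated trace formula of \cite[Corollary 3.3]{lnw} termwise, then separate a diagonal-type main contribution from a $t^{-1/2}$ error) matches the paper, and your treatment of the error term and of the final combinatorial count is essentially the paper's (which packages the tuple-counting into the arithmetic function $a_j(n)$ and the estimates of Lemma~\ref{lem-a}). But there is a genuine gap at the heart of your Step 2. You assert that ``character orthogonality of the Schur polynomials on the unitary axis forces $\mathcal{M}_t(\mathbf{m})$ to vanish unless the Littlewood--Richardson expansion at every prime contains the trivial partition,'' which would pin the main term to the permutation diagonal. This is false for the measure that actually appears: the main term of the trace formula is an integral against the $p$-adic Plancherel measure $d\mu_p$, not against the Sato--Tate measure $d\mu_{\mathrm{ST}}$. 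The Schur polynomials are orthonormal only for $d\mu_{\mathrm{ST}}$; for finite $p$ one has $\int_{T_0/\mathfrak{S}_n} S_{\mathbf{\kappa}}\,d\mu_p \neq 0$ for $\mathbf{\kappa}\neq 0$, so the off-diagonal configurations do \emph{not} drop out and your matching count misses them entirely.

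The paper's resolution is quantitative rather than exact: it writes $\ell m$ as a coprime product of a squarefree part $h$ and a squarefull part $r$, uses the trivial bound \eqref{new eq 1} at the squarefull primes, and at each prime dividing $h$ (where $u_p+v_p=1$) invokes the explicit formula $\int_{T_0/\mathfrak{S}_n} S_{\mathbf{\kappa}}\,d\mu_p=\prod_{i=1}^{n-1}(1-p^{-i})\sum_{\mathbf{\eta}} d_{\mathbf{\kappa}\mathbf{\eta}}^{\mathbf{\eta}}p^{-\|\mathbf{\eta}\|}$, supported on $|\mathbf{\eta}|\ge\|\mathbf{\kappa}\|/n$, to get the decay $|\int S_{\mathbf{\kappa}}\,d\mu_p|\le(1+|\mathbf{\kappa}|)^{n^2-n}p^{-1}$ of \eqref{squarefree main term}. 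It is exactly this extra factor $h^{-1}$, fed into the bound for $\sum^\flat_h\sum^\natural_r a_{2j}(hr)/(h^2r)$ in Lemma~\ref{lem-a}, that produces the $\bigl((BC_{\mathbf{\kappa}})^2j/(P\log P)\bigr)^j$ term. Without a substitute for \eqref{squarefree main term}, your argument cannot close: the terms where some prime occurs exactly once in $\ell m$ would contribute at the level of the error term's trivial bound, which is far too large. Your final remark about deriving $C_{\mathbf{\kappa}}$ from Weyl's dimension formula is in the right spirit (it is \eqref{new eq 1} and \eqref{new eq} in the paper), but the decisive input you are missing is the $p^{-1}$ decay of the first Plancherel moment of $S_{\mathbf{\kappa}}$, not a Littlewood--Richardson mass bound.
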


Let $q\ge 2$ be an integer and let $\chi$ be a non principal Dirichlet character modulo $q$. Then the
evaluation of the least integer $n_\chi$ among all positive integers $n$ for which $\chi(n)\neq 0,1$ is referred as Linnik's problem. One generalization formulated to Maass forms on $GL(n,\mathbb{R})$ is the evaluation of the smallest integer $n$ for which
$$
A_{\phi_1}(n,1,\ldots,1)\neq A_{\phi_2}(n,1,\ldots,1),
$$
where
$\phi_{1}\neq \phi_{2}$. We denote this smallest integer by $n_{1,2}$.
The first application uses Theorem \ref{main theorem} to investigate an analogue of Linnik's problem.

 Suppose
$\mathcal{P}$ is a set of prime numbers of positive density in the sense that
\begin{equation}\label{condition}
\sum\limits_{\substack{z<p\le2z \\p\in\mathcal{P}}}\frac{1}{p}\ge\frac{\Delta}{\log z}\ \ \ (\forall \ z\ge z_0),
\end{equation}
with some fixed constants $\Delta>0$ and $z_0>0$.

\begin{theorem}\label{Linnik}
Let $0\neq \mathbf{\kappa}=(\kappa_1,\ldots,\kappa_{n-1})\in \mathbb{N}_0^{n-1}$ and assume the set $\mathcal{P}$ (of primes) satisfies \eqref{condition}.
Let $\Lambda=\left\{\lambda(p)\right\}_p$ be a fixed complex sequence indexed by prime numbers.  For any $\delta>0$,  there is a positive constant $C=C(\delta,  \mathbf{\kappa}, \mathcal{P})$ such that the number of $\phi\in \mathcal{H}_t$ satisfying
$$
A_\phi(p^{\kappa_1},\ldots,p^{\kappa_{n-1}})=\lambda(p)\ \ \ \textrm{for}\ \ \ p\in\mathcal{P},\ \ \ \textrm{and}\ \ \ \delta\log t<p\le 2\delta\log t
$$
is bounded by
$$
\ll t^de^{-C{\log t/\log_2t}}
$$
where $\log_r$ is the $r$-fold iterated logarithm. The implied constant depends at most on $\delta,  \mathbf{\kappa}$ and $\mathcal{P}$.
\end{theorem}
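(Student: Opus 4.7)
Let $\mathcal{E}\subset\mathcal{H}_t$ denote the exceptional set whose cardinality we wish to bound. Put $P:=\delta\log t$, $Q:=2\delta\log t$, and $\mathcal{P}_*:=\mathcal{P}\cap(P,Q]$; the hypothesis \eqref{condition} gives $\sum_{p\in\mathcal{P}_*}1/p\gg_\delta 1/\log P$. The plan mirrors the Montgomery--Vaughan Tur\'an power-sum attack on Linnik's problem: for any sequence $\{b_p\}$ with $|b_p|\le1$ supported on $\mathcal{P}_*$, every $\phi\in\mathcal{E}$ satisfies
$$\sum_{P<p\le Q}b_p\frac{A_\phi(p^{\mathbf{\kappa}})}{p}=\sum_{p\in\mathcal{P}_*}\frac{b_p\lambda(p)}{p}=:Z(b),$$
so Theorem~\ref{main theorem} (with $B=1$) yields
$$|\mathcal{E}|\,|Z(b)|^{2j}\ll|\mathcal{H}_t|\,\mathcal{R}(j,P,Q),$$
where $\mathcal{R}(j,P,Q)$ abbreviates the right-hand side of \eqref{large sieve}.

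The remaining task is to arrange $|Z(b)|\gg 1/\log P$. I would take $b_p:=\overline{\lambda(p)}/\max(1,|\lambda(p)|)$, so $|b_p|\le1$ and $Z(b)=\sum_{p\in\mathcal{P}_*}|\lambda(p)|^2/(p\max(1,|\lambda(p)|))$. Fix a threshold $\eta:=c/\log P$ with $c>0$ small depending on $\Delta$ and $\mathbf{\kappa}$. If $Z(b)\ge\eta$ (``large'' regime) we proceed directly. Otherwise (``small'' regime) both $\sum_{|\lambda(p)|\le1}|\lambda(p)|^2/p$ and $\sum_{|\lambda(p)|>1}|\lambda(p)|/p$ are below $\eta$, which together with the Kim--Sarnak-type individual bound $|\lambda(p)|=|A_\phi(p^{\mathbf{\kappa}})|\le C_{\mathbf{\kappa}}p^{\theta\|\mathbf{\kappa}\|}$ (valid because $\lambda(p)$ is realised by some $\phi\in\mathcal{H}_t$) forces a positive proportion of $p\in\mathcal{P}_*$ to have $|\lambda(p)|\le1/2$. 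I would then use the Schur-polynomial identity (from Section~2)
$$|A_\phi(p^{\mathbf{\kappa}})|^2=A_\phi(p^{\mathbf{\kappa}})A_\phi(p^{\mathbf{\kappa}^\iota})=1+\sum_{\mathbf{\nu}\neq0}c_{\mathbf{\nu}}A_\phi(p^{\mathbf{\nu}}),$$
in which the trivial representation appears with multiplicity $1$ in $V_{\mathbf{\kappa}}\otimes V_{\mathbf{\kappa}}^*$ and the sum is finite. For $\phi\in\mathcal{E}$, summing the identity $|\lambda(p)|^2-1=\sum_{\mathbf{\nu}\neq0}c_{\mathbf{\nu}}A_\phi(p^{\mathbf{\nu}})$ weighted by $1/p$ over $\mathcal{P}_*$ replaces $Z(b)$ in the large-sieve bound by $W_\lambda:=\sum_{p\in\mathcal{P}_*}(|\lambda(p)|^2-1)/p$, which in the small regime satisfies $|W_\lambda|\gg 1/\log P$. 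A convexity step $|\sum_{\mathbf{\nu}}a_{\mathbf{\nu}}|^{2j}\le N^{2j-1}\sum_{\mathbf{\nu}}|a_{\mathbf{\nu}}|^{2j}$ (with $N=O_{\mathbf{\kappa}}(1)$) reduces the new moment to finitely many instances of Theorem~\ref{main theorem}, one per $\mathbf{\nu}$, each with $b_p=\mathbf{1}_{\mathcal{P}_*}(p)$.

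In either regime we arrive at $|\mathcal{E}|\ll t^d(\log P)^{2j}\mathcal{R}(j,P,Q)$. Choose $j:=\lfloor\alpha P/\log P\rfloor\asymp\alpha\delta\log t/\log_2t$ with $\alpha>0$ sufficiently small depending on $(\delta,\mathbf{\kappa},\mathcal{P})$, so that $40j\log P/P<1$ renders the bracket $1+(40j\log P/P)^{j/3}$ of order $O(1)$. The first term of $\mathcal{R}$ becomes $t^{-1/2}Q^{O(j\|\mathbf{\kappa}\|)}=t^{-1/2+o(1)}$, which is negligible; the second is $(C_{\mathbf{\kappa}}^2\alpha/(\log P)^2)^j$. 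Multiplying by $(\log P)^{2j}$ yields $(C'\alpha)^j\le(1/2)^j=\exp(-j\log2)\ll\exp(-C\log t/\log_2t)$. Since $|\mathcal{H}_t|\asymp t^d$, this produces the asserted bound.

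The main obstacle is the small-regime analysis: a priori a handful of primes with large $|\lambda(p)|$ could cancel the dominant negative contribution of the $|\lambda(p)|\le1/2$ primes to $W_\lambda$. Pinning down $|W_\lambda|\gg1/\log P$ calls for the Kim--Sarnak-type bound together with a carefully chosen threshold $\eta$ separating the two regimes, so that neither ``bad'' contribution can dominate the main term in its own regime. Modulo this bookkeeping, the remainder is a routine application of Theorem~\ref{main theorem}.
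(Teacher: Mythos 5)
Your overall architecture (power-sum/large-sieve attack, the Schur--Littlewood--Richardson identity to extract the main term $1$ from $|A_\phi(p^{\mathbf{\kappa}})|^2$, the choice $j\asymp P/\log P$ and the final bookkeeping) is the same as the paper's. Your large/small split on $Z(b)$ is a legitimate reorganization: the paper instead takes $b_p=\overline{\lambda(p)}$ outright, so the inner sum becomes $\sum_p|A_\phi(p^{\mathbf{\kappa}})|^2/p$, and it controls the oscillating terms $A_\phi(p^{\mathbf{\xi}})$ by introducing auxiliary exceptional sets $E^{\mathbf{\xi}}(t;P)$, each bounded by a second application of Theorem~\ref{main theorem}; your convexity step plays the same role.

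There is, however, a genuine gap exactly where you flag ``the main obstacle''. In the small regime you need $|W_\lambda|\gg 1/\log P$, and to dominate the positive contribution $\sum_{|\lambda(p)|>1}|\lambda(p)|^2/p\le\eta\cdot\sup_p|\lambda(p)|$ you invoke a Kim--Sarnak-type bound $|\lambda(p)|\le C_{\mathbf{\kappa}}p^{\theta\|\mathbf{\kappa}\|}$. Since the Ramanujan conjecture is open for $GL(n)$ Maass forms, the best admissible $\theta$ is a fixed positive constant, so with $p\asymp\log t$ this bound is $\asymp(\log t)^{\theta\|\mathbf{\kappa}\|}$, which is unbounded in $t$; multiplied by $\eta\asymp1/\log_2t$ it swamps the main term $\Delta/\log P$, and no choice of the threshold $\eta$ repairs this (shrinking $\eta$ destroys the large-regime estimate, since $\eta^{-2j}$ then overwhelms the second term of \eqref{large sieve}). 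The fix, which the paper uses and you must import, is to first delete from $\mathcal{H}_t$ the union over $p\in\mathcal{P}\cap(P,2P]$ of the sets $\mathcal{E}(t,p)=\{\phi:\log\max_i|\alpha_{\phi,i}(p)|>1\}$; by the density bound of Lau--Wang (Theorem 7.3 of \cite{lw}) this union has cardinality $\ll t^{d-c'/\log P}=t^d e^{-c'\log t/\log_2t}$, which is acceptable. On the complement, \eqref{new eq 1} gives $|A_\phi(p^{\mathbf{\kappa}})|\le e^{\|\mathbf{\kappa}\|}(1+|\mathbf{\kappa}|)^{n^2-n}=O_{\mathbf{\kappa}}(1)$, so either the residual exceptional set is already empty whenever some $|\lambda(p)|$ exceeds this constant, or $\sup_p|\lambda(p)|=O_{\mathbf{\kappa}}(1)$ and your small-regime estimate goes through. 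With this insertion your plan is complete.
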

\begin{remark}
Refer to \cite{lwu} and \cite{wa} for the case of $GL(2,\mathbb{R})$.
\end{remark}
\begin{corollary} Let $\phi_{0}\in\mathcal{H}_t$ be fixed and let $\mathcal{P}$ be as stated in Theorem \ref{Linnik}. Let $\ell\in \N$ and let $\delta>0$ be any number.
Then there is a positive constant $C=C(\delta,\ell,\mathcal{P})$ such that the number of $\phi\in \mathcal{H}_t$ satisfying
$$
A_{\phi}(p^\ell,1,\ldots,1)=A_{\phi_{0}}(p^\ell,1,\ldots,1)\ \ \ \textrm{for}\ \ \ p\in\mathcal{P},\ \ \ \textrm{and}\ \ \ \delta\log t<p\le 2\delta\log t
$$
is bounded by
$$
\ll_{\delta,\ell,\mathcal{P}} t^de^{-C{\log t/\log_2t}}.
$$
\end{corollary}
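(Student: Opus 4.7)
\medskip

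\noindent\textbf{Proof plan for the Corollary.} The statement is an essentially immediate specialisation of Theorem~\ref{Linnik}. The plan is to choose $\mathbf{\kappa}=(\ell,0,\ldots,0)\in\mathbb{N}_0^{n-1}$, so that $A_\phi(p^{\kappa_1},\ldots,p^{\kappa_{n-1}})=A_\phi(p^\ell,1,\ldots,1)$, and then to set the sequence $\Lambda=\{\lambda(p)\}_p$ appearing in Theorem~\ref{Linnik} to be the sequence
\[
\lambda(p):=A_{\phi_0}(p^\ell,1,\ldots,1),
\]
which is a fixed complex sequence indexed by primes because $\phi_0\in\mathcal{H}_t$ is fixed.

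With this choice, the condition ``$A_\phi(p^{\kappa_1},\ldots,p^{\kappa_{n-1}})=\lambda(p)$ for all $p\in\mathcal{P}$ with $\delta\log t<p\le 2\delta\log t$'' in Theorem~\ref{Linnik} is exactly the hypothesis of the corollary. Moreover $\mathbf{\kappa}\neq 0$ because $\ell\ge 1$, so Theorem~\ref{Linnik} applies and yields the bound $\ll t^d e^{-C\log t/\log_2 t}$ with a constant $C=C(\delta,\mathbf{\kappa},\mathcal{P})$. Since $\mathbf{\kappa}=(\ell,0,\ldots,0)$ depends only on $\ell$, this constant is a function of $\delta,\ell,\mathcal{P}$ only, and likewise the implied constant depends only on $\delta,\ell,\mathcal{P}$, giving precisely the bound stated in the corollary.

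There is no genuine obstacle here: the entire content of the corollary is packaged into Theorem~\ref{Linnik}. The only thing to verify — and it is a direct reading of the definitions — is that the prescription $\mathbf{\kappa}=(\ell,0,\ldots,0)$ really does pick out the Fourier coefficient $A_\phi(p^\ell,1,\ldots,1)$ and that the target sequence $\lambda(p)=A_{\phi_0}(p^\ell,1,\ldots,1)$ is admissible as a fixed input to Theorem~\ref{Linnik} (it is, since $\phi_0$ is fixed independently of the variable $\phi$ ranging in $\mathcal{H}_t$).
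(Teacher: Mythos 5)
Your proposal is correct and is exactly the paper's (implicit) argument: the corollary is stated without proof as the specialisation of Theorem~\ref{Linnik} to $\mathbf{\kappa}=(\ell,0,\ldots,0)$ and $\lambda(p)=A_{\phi_0}(p^\ell,1,\ldots,1)$. It is worth observing, as a sanity check on the ``fixed sequence'' hypothesis, that the constant $C$ produced in the proof of Theorem~\ref{Linnik} does not depend on the sequence $\Lambda$ at all (only on $\delta$, $\mathbf{\kappa}$, $\mathcal{P}$, since $|\lambda(p)|$ may be assumed bounded by $e^{\|\mathbf{\kappa}\|}(1+|\mathbf{\kappa}|)^{n^2-n}$ lest the exceptional set be empty), so the fact that $\phi_0$ lives in $\mathcal{H}_t$ and hence varies with $t$ causes no loss of uniformity.
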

By the corollary, we see that for any fixed $\phi_{1}$, the number of $\phi_{2}\in \mathcal{H}_t$ for which
$$n_{1,2}\ll \log t$$
does not hold is
$$
\ll \big|\mathcal{H}_t\big| e^{-C{\log t/\log_2t}}.
$$

The second application concerns the sign changes of Maass forms on $GL(n,\mathbb{R})$. In the case of $GL(2,\mathbb{R})$, there are fruitful results (for example, see \cite{klsw}, \cite{ma1}, \cite{ma2}). In the case of $GL(3,\mathbb{R})$, Steiger \cite{st} proved that there is a positive proportion of Hecke-Maass forms $\phi$ with positive real part of $A_\phi(p,1)$ for a fixed prime $p$ and Xiao and Xu \cite{xx} gave a statistical result on the signs of $A_\phi(p^{\kappa_1},p^{\kappa_2})+A_\phi(p^{\kappa_2},p^{\kappa_1})$. Applying Theorem \ref{main theorem}, we obtain the following result.
\begin{theorem}\label{sign change}
Let $0\neq \mathbf{\kappa}=(\kappa_1,\ldots,\kappa_{n-1})\in \mathbb{N}_0^{n-1}$.
Let $\left\{\varepsilon_p\right\}_{p\in\mathcal{P}}$ be a sequence of real numbers with $\varepsilon_p\in\left\{\pm1\right\}$ where  the set of primes $\mathcal{P}$ satisfies  \eqref{condition}. For any $\delta>0$, there is a positive constant $C=C(\delta,\mathbf{\kappa},\mathcal{P})$  such that the number of $\phi\in \mathcal{H}_t$ satisfying
$$
\varepsilon_p(A_\phi(p^{\kappa_1},\ldots,p^{\kappa_{n-1}})
+A_\phi(p^{\kappa_{n-1}},\ldots,p^{\kappa_{1}}))>0
$$
for $p\in\mathcal{P}$ and $\delta\log t<p\le2\delta\log t$
is bounded by
$$
\ll t^de^{-C{\log t/\log_2t}}.
$$
The implied constant depends at most on $\delta,  \mathbf{\kappa}$ and $\mathcal{P}$.
\end{theorem}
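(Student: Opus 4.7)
The plan is to reduce the sign condition to a lower bound on a linear form in Hecke coefficients and then apply Theorem \ref{main theorem} together with Markov's inequality, in the spirit of the argument for Theorem \ref{Linnik}. Since $A_\phi(p^{\mathbf{\kappa}^\iota})=\overline{A_\phi(p^{\mathbf{\kappa}})}$, one has $A_\phi(p^{\mathbf{\kappa}})+A_\phi(p^{\mathbf{\kappa}^\iota})=2\,\mathrm{Re}\,A_\phi(p^{\mathbf{\kappa}})\in\R$, so a bad $\phi\in\mathcal{H}_t$, meaning one satisfying the sign condition at every $p\in\mathcal{P}\cap(P,Q]$ with $P:=\delta\log t$ and $Q:=2\delta\log t$, is characterised by $\varepsilon_p\,\mathrm{Re}\,A_\phi(p^{\mathbf{\kappa}})>0$ on that range.

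Set $b_p:=\varepsilon_p\,\mathbf{1}_{\mathcal{P}\cap(P,Q]}(p)$, so $|b_p|\le 1$, and $X_\phi:=\sum_p b_p A_\phi(p^{\mathbf{\kappa}})/p$. For a bad $\phi$ every summand of $\mathrm{Re}(X_\phi)$ is non-negative, whence $|X_\phi|\ge\mathrm{Re}(X_\phi)=\sum_p|\mathrm{Re}\,A_\phi(p^{\mathbf{\kappa}})|/p\ge0$. Since $Q\asymp\log t$, the $t^{-1/2}$ term in Theorem \ref{main theorem} is negligible for $j\le c'\log t/\log_2 t$, so Markov's inequality combined with that theorem yields
\[
\#\{\phi\in\mathcal{H}_t:|X_\phi|\ge h\}\;\ll\;|\mathcal{H}_t|\,h^{-2j}\Bigl(\frac{c\,j}{\log t\,\log_2 t}\Bigr)^j.
\]
Choosing $h:=c_1/\log_2 t$ and $j:=\lfloor c_2\log t/\log_2 t\rfloor$ with $c_1,c_2>0$ sufficiently small, the right-hand side collapses to $\ll|\mathcal{H}_t|\exp(-C\log t/\log_2 t)$, disposing of every bad $\phi$ with $\mathrm{Re}(X_\phi)\ge c_1/\log_2 t$.

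The main obstacle is the complementary family of bad $\phi$ with $\sum_p|\mathrm{Re}\,A_\phi(p^{\mathbf{\kappa}})|/p<c_1/\log_2 t$, for which $|\mathrm{Re}\,A_\phi(p^{\mathbf{\kappa}})|$ is abnormally small on almost every $p\in\mathcal{P}\cap(P,Q]$. To count these I would employ a second-moment argument: the Matz--Templier Plancherel density theorem gives $|\mathcal{H}_t|^{-1}\sum_\phi|A_\phi(p^{\mathbf{\kappa}})+A_\phi(p^{\mathbf{\kappa}^\iota})|^2\ge c_0>0$ uniformly in $p$; expanding $(A_\phi(p^{\mathbf{\kappa}})+A_\phi(p^{\mathbf{\kappa}^\iota}))^2$ as a finite integer linear combination of Hecke coefficients $A_\phi(p^{\mathbf{\lambda}})$ via (degenerate) Schur polynomial multiplication, and then applying Theorem \ref{main theorem} to each non-constant piece, shows that $\sum_p(A_\phi(p^{\mathbf{\kappa}})+A_\phi(p^{\mathbf{\kappa}^\iota}))^2/p$ concentrates around its mean of size $\asymp 1/\log_2 t$ outside an exponentially small set of $\phi$. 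A H\"older-type inequality, together with the pointwise bound $|A_\phi(p^{\mathbf{\kappa}})|\ll p^{\theta\|\mathbf{\kappa}\|}$ coming from progress toward Ramanujan, transfers this lower bound on the second moment into the required lower bound on $\sum_p|\mathrm{Re}\,A_\phi(p^{\mathbf{\kappa}})|/p$, thereby ruling out the complementary family. Combining the two cases and adjusting constants delivers the stated bound $\ll t^d\exp(-C\log t/\log_2 t)$.
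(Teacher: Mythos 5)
Your overall architecture coincides with the paper's: use the positivity of $\varepsilon_p\big(A_\phi(p^{\mathbf{\kappa}})+A_\phi(p^{\mathbf{\kappa}^\iota})\big)$ to convert the sign condition into a lower bound for $\sum_p \varepsilon_p\big(A_\phi(p^{\mathbf{\kappa}})+A_\phi(p^{\mathbf{\kappa}^\iota})\big)/p$ via the second moment $\sum_p\big|A_\phi(p^{\mathbf{\kappa}})+A_\phi(p^{\mathbf{\kappa}^\iota})\big|^2/p$, expand the square by the Littlewood--Richardson rule into a constant term of size $\gg \sum_{p\in\mathcal{P}_P}1/p \gg 1/\log_2 t$ plus fluctuating terms $A_\phi(p^{\mathbf{\xi}})$ controlled by Theorem~\ref{main theorem}, and finish with Markov's inequality. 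The moment computation, the choice $h\asymp 1/\log_2 t$, $j\asymp \log t/\log_2 t$, and the treatment of the fluctuating pieces are all in order.

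The genuine gap is in your ``transfer'' step, where you pass from the lower bound on $\sum_p|y_p|^2/p$ (with $y_p=A_\phi(p^{\mathbf{\kappa}})+A_\phi(p^{\mathbf{\kappa}^\iota})$) to the lower bound on $\sum_p|y_p|/p$ using the pointwise estimate $|A_\phi(p^{\mathbf{\kappa}})|\ll p^{\theta\|\mathbf{\kappa}\|}$ from partial progress toward Ramanujan. The inequality you need is $\sum_p|y_p|/p\ge \big(\sum_p|y_p|^2/p\big)/\max_p|y_p|$, and with $p\asymp\log t$ the unconditional Ramanujan exponent ($\theta=1/2-1/(n^2+1)$ for $GL(n)$) makes $\max_p|y_p|$ as large as $(\log t)^{\theta\|\mathbf{\kappa}\|}$. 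This forces $h\ll 1/\big((\log t)^{\theta\|\mathbf{\kappa}\|}\log_2 t\big)$, and the Markov step then yields at best $\exp\big(-C(\log t)^{1-2\theta\|\mathbf{\kappa}\|}/\log_2 t\big)$, which is weaker than claimed and collapses entirely once $2\theta\|\mathbf{\kappa}\|\ge 1$ (which happens already for modest $\|\mathbf{\kappa}\|$ when $n\ge 3$). The paper avoids this by not using any pointwise bound valid for \emph{all} forms: for each $p\in\mathcal{P}_P$ it removes the exceptional set $\mathcal{E}(t,p)=\{\phi:\max_i|\alpha_{\phi,i}(p)|>e\}$, whose cardinality is $\ll t^{d-c_0/\log p}=t^d e^{-c_0\log t/\log_2 t}$ by \cite[Theorem 7.3]{lw} --- exactly of the acceptable size --- and on the complement applies the \emph{uniform} bound $|A_\phi(p^{\mathbf{\kappa}})|\le e^{\|\mathbf{\kappa}\|}(1+|\mathbf{\kappa}|)^{n^2-n}=O_{\mathbf{\kappa}}(1)$ from \eqref{new eq 1}, so that $\max_p|y_p|$ is a constant and $h\asymp 1/\log_2 t$ survives. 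Replacing your Ramanujan step by this exceptional-set excision repairs the argument; as written, the proof does not deliver the stated bound.
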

\begin{remark}
Refer to \cite{lwu} and \cite{wa} for the case of $GL(2,\mathbb{R})$ and to \cite{xx} for $GL(3,\mathbb{R})$.
\end{remark}

The size of $L(1, f)$ for $L$-functions over a family of $f$ has attracted much interest. For $\phi\in\mathcal{H}_t$, its associated $L$-function is defined as
\begin{eqnarray*}
L(s,\phi) := \sum_{m\ge 1} A_\phi(m,1,\cdots,1) m^{-s},
\end{eqnarray*}
for $\Re{\rm e}\, s >(n+1)/2$, and factors into the Euler product
$$
L(s,\phi) =\prod_p \prod_{i=1}^n (1-\alpha_{\phi,i}(p) p^{-s})^{-1} \qquad
$$
where $\alpha_{\phi,i}(p)$, $1\le i\le n$, are the Satake parameters. It is well known that $L(s,\phi)$ can be analytically continued to the whole complex plane.

Recently, Lau and Wang \cite{lw} proved that for all $\phi\in\mathcal{H}_t$, we have
$$
\{1+o(1)\} (2B_n^-\log_2t)^{-A_n^-} \le |L(1,\phi)|\le \{1+o(1)\} (2B_n^+\log_2t)^{A_n^+}.
$$
under the Generalized Ramanujan Conjecture and the Generalized Riemann Hypothesis. Here $B_n^\pm$ are the positive constants in \cite[Lemma~5.3]{lw} and
\begin{equation*}
A_n^+ :=n\qquad \mbox{ and }\qquad A_n^- :=\left\{\begin{array}{ll}
n & \mbox{ if $n$ is even},\vspace{1mm}\\
n\cos(\pi/n) & \mbox{ if $n$ is odd}.
\end{array}
\right.
\end{equation*}
On the other hand, Lau and Wang \cite{lw} also proved that there exist $\phi^\pm\in\mathcal{H}_t$ such that
\begin{equation*}
|L(1,\phi^-)|\le \{1+o(1)\} (B_n^-\log_2t)^{-A_n^-}
\ \mbox{ , } \
|L(1,\phi^+)|\ge \{1+o(1)\} (B_n^+\log_2t)^{A_n^+}.
\end{equation*}
The proportion of such exceptional $\phi^\pm$ in $\mathcal{H}_t$ is at least $ \exp\big(-(\log t)/(\log_2 t)^{3+o(1)}\big)$.
In fact, alongside the Montgomery-Vaughan conjecture (cf. Conjecture 1 in \cite{GS}), the proportion of $\phi^\pm$  in $\mathcal{H}_T$ satisfying $|L(1,\phi^\pm)|^{\pm 1} \ge (B_n^\pm\log_2T)^{A_n^\pm}$ is predicted to
be $>\exp(-C \log t/\log_2t)$ and $<\exp(-c\log t/\log_2t)$ respectively for some constants $C>c>0$.

Theorem \ref{main theorem} gives an  upper bound towards the Montgomery-Vaughan conjecture.
Define
$$
F_{t}^+(s)=\frac{1}{|\mathcal{H}_t|}\sum\limits_{\substack{\phi\in \mathcal{H}_t\\ |L(1,\phi)|>(B_n^+s)^{A_n^+}}}1
$$
and
$$
F_{t}^-(s)=\frac{1}{|\mathcal{H}_t|}\sum\limits_{\substack{\phi\in \mathcal{H}_t\\ |L(1,\phi)|<(B_n^-s)^{A_n^-}}}1.
$$
\begin{theorem}\label{M-V conjecture}
For any $\varepsilon>0$, there are two positive constants $c=c(\varepsilon)$ and $t_0=t_0(\varepsilon)$ such that
$$
F_{t}^\pm(\log_2t+r)\le\exp\left(-c(|r|+1){\frac{\log t}{(\log_2t)(\log_3t)(\log_4t)}}\right)
$$
for $t\ge t_0$ and $\log \varepsilon\le r\le(9-\varepsilon)\log_2t$.
\end{theorem}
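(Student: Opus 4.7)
The plan is to combine Theorem~\ref{main theorem} with the Granville--Soundararajan style large-deviation technique for the distribution of $\log|L(1,\phi)|$. The starting point is to use the Euler product to write
$$\log|L(1,\phi)| \;=\; \Re\sum_p \frac{A_\phi(p,1,\ldots,1)}{p} \;+\; O(1)$$
and truncate the prime sum at some $x=x(t,r)$, a power of $\log t$, controlling the tail $\sum_{p>x}$ via a mean-value argument on the family $\mathcal{H}_t$ itself (the resulting exceptional set already being of the target size). By the extremal constants $A_n^\pm$, $B_n^\pm$ from Lau--Wang \cite{lw}, the hypothesis $|L(1,\phi)| > (B_n^+(\log_2 t+r))^{A_n^+}$, respectively $|L(1,\phi)|<(B_n^-(\log_2 t+r))^{-A_n^-}$, translates, outside this small exceptional set, into
$$\pm\, \Re\sum_{p\le x}\frac{A_\phi(p,1,\ldots,1)}{p} \;\ge\; \log_2 t + r + O(1).$$

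Next, I would apply Chebyshev's inequality with an even moment $2j$ to be optimized: setting $V:=\log_2 t+r+O(1)$,
$$F_t^\pm(\log_2 t+r) \;\le\; V^{-2j}\,\frac{1}{|\mathcal{H}_t|}\sum_{\phi\in\mathcal{H}_t}\Big|\sum_{p\le x}\frac{A_\phi(p,1,\ldots,1)}{p}\Big|^{2j}.$$
Dyadically decomposing $(1,x]$ into $O(\log_2 x)$ intervals $(P_k,2P_k]$ and applying Theorem~\ref{main theorem} with $\mathbf{\kappa}=(1,0,\ldots,0)$ (so $\|\mathbf{\kappa}\|=n-1$ and $C_{\mathbf{\kappa}}=O(1)$) to each block, then reassembling via H\"older's inequality and Mertens' formula $\sum_p 1/p^2 =O(1)$, one obtains a moment bound of schematic shape $(Cj/\log\log x)^j$. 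This is valid under the admissibility constraints $j \le c\,x/\log x$ and $j\log x \le c\log t$, which keep both error terms in \eqref{large sieve} under control; in particular the factor $t^{-1/2}Q^{2jL\|\mathbf{\kappa}\|}$ is harmless as long as $x$ stays below a small power of $t$.

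The final step is the parameter optimization. Choosing
$$j \;\asymp\; \frac{(|r|+1)\log t}{(\log_2 t)(\log_3 t)(\log_4 t)}\qquad\text{and}\qquad \log x \;\asymp\; \frac{\log t}{j\,\log_3 t},$$
the Chebyshev exponent $-2j\log V+j\log(Cj/\log\log x)$ collapses, after substitution, to the claimed bound $\exp\bigl(-c(|r|+1)\log t/(\log_2 t\log_3 t\log_4 t)\bigr)$. The upper restriction $r\le(9-\varepsilon)\log_2 t$ keeps $\log x$ small enough for admissibility, while $r\ge \log\varepsilon$ ensures $j\ge 1$. The lower-deviation case $F_t^-$ is handled in parallel by working with $\log|L(1,\phi)^{-1}|$, whose prime expansion has the opposite sign and is thus also controlled by Theorem~\ref{main theorem} with $b_p\to -b_p$.

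The main obstacle is the joint tuning of the truncation $x$ and the moment order $j$: one must track how the multinomial/Stirling gain in the dyadic combination supplies the $\log_4 t$ denominator factor, while the admissibility bound $j\log x\le c\log t$ supplies the $\log_3 t$ factor, and the Chebyshev scale $V\asymp \log_2 t + r$ supplies the outer $(|r|+1)/\log_2 t$ scaling. Equally delicate is the unconditional reduction from $\log|L(1,\phi)|$ to the short prime sum outside an exceptional set of acceptable density, for which one needs to combine a standard second-moment argument for the tail with Theorem~\ref{main theorem} applied to a slightly longer range of primes.
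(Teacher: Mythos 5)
Your overall plan---reduce to a long prime sum and apply Theorem~\ref{main theorem} with a high moment via Chebyshev---is not the route the paper takes, and as set up it breaks down at the Chebyshev step for two concrete reasons. First, the translation of the hypothesis is off by an exponential: from $|L(1,\phi)|>(B_n^+(\log_2 t+r))^{A_n^+}$ one gets, after taking logarithms, a lower bound of size $A_n^+\log\bigl(B_n^+(\log_2 t+r)\bigr)\asymp\log_3 t$ for $\Re\mathrm{e}\sum_{p\le x}A_\phi(p,1,\ldots,1)p^{-1}$, not $\log_2 t+r$; indeed $\sum_{p\le x}p^{-1}\asymp\log\log x\asymp\log_3 t$ when $x$ is a power of $\log t$, so a deviation of size $\log_2 t$ is unattainable even under Ramanujan. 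Second, and fatally, the $2j$-th moment of the full sum $\sum_{p\le x}A_\phi(p,1,\ldots,1)p^{-1}$ is of order at least $(cj)^j$: its variance $\asymp\sum_{p}p^{-2}$ is bounded below by an absolute constant, dominated by the primes $p\le j$, for which the factor $(40j\log P/P)^{j/3}$ in \eqref{large sieve} explodes (the theorem is only useful when $j\ll P/\log P$, so the dyadic blocks near $P=2$ cannot be fed into it at all). Hence your claimed moment bound $(Cj/\log\log x)^j$ cannot hold, and Chebyshev yields $(cj/V^2)^j$, which is $\ge 1$ as soon as $j\gg V^2$. Since $V=O(\log_2 t)$ at best, no choice of $j$ in this scheme produces an exceptional set smaller than roughly $\exp(-(\log_2 t)^{2+o(1)})$, far from the required $\exp\bigl(-c(|r|+1)\log t/((\log_2 t)(\log_3 t)(\log_4 t))\bigr)$. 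The underlying issue is that ``$|L(1,\phi)|$ is within a bounded power of its maximum'' is not a large-deviation event for the linear statistic $\sum_{p}A_\phi(p,1,\ldots,1)/p$; it is an ``extremal at almost every small prime'' event.

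The paper's proof is structured precisely to avoid this. Proposition~\ref{proposition1} gives $L(1,\phi)=\{1+O(1/\log_2 t)\}\prod_{p\le z}\prod_{i}(1-\alpha_{\phi,i}(p)/p)^{-1}$ for all $\phi$ outside a set $E^1(z)$ already of the target size; the large sieve enters only there, applied to primes $p>z\ge\varepsilon\log t$, where each term is $O(1/\log t)$ and only a deviation of size $1/\log_2 t$ must be excluded, so $j$ can be taken as large as $\asymp\log t/\log_2 t$ while respecting $j\ll P/\log P$. The contribution of $p\le z$ is then bounded \emph{pointwise} (no averaging at all) by $(e^\gamma\log z)^{\alpha' n}$ with $\alpha'=\exp(1/((\log_3t)(\log_4t)))$, using the near-Ramanujan bound of \cite[Theorem 7.3]{lw} outside the small set $\mathcal{H}_t^-$; choosing $z$ with $\log z\approx\log_2 t+r$ pushes this pointwise bound below the threshold $(B_n^+(\log_2 t+r))^{A_n^+}$, whence $F_t^+(\log_2 t+r)\le|E^1(z)|/|\mathcal{H}_t|$, and the constraint $z\le(\log t)^{10}$ is exactly where $r\le(9-\varepsilon)\log_2 t$ is used. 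To salvage your argument you would need to replace the Chebyshev step on the full sum by this pointwise treatment of the primes $p\le z$ and reserve the moment method for the tail $p>\varepsilon\log t$.
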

\begin{remark}
Refer to \cite{lwu} and \cite{wa} for the case of $GL(2,\mathbb{R})$.
\end{remark}

\section{Preliminaries}

The Fourier coefficients $A_\phi(p^{\mathbf{\kappa}})$ can be expressed in terms of the (degenerate) Schur polynomials and Satake parameters as in \eqref{eqApk}. The degenerate Schur polynomial is defined as
\begin{equation}\label{eq-schur}
S_{\mathbf{\kappa}} (x_{1}, x_{2},\cdots, x_{n}) := \frac{\det \begin{pmatrix} x_j^{\sum_{l=1}^{n-i} (\kappa_l +1)}\end{pmatrix}_{1\le i,j\le n}}{\det \begin{pmatrix} x_j^{\sum_{l=1}^{n-i} 1}\end{pmatrix}_{1\le i,j\le n}}
\end{equation}
for $\mathbf{\kappa}=(\kappa_1,\cdots,\kappa_{n-1})\in \N_0^{n-1}$. Matz and Templier established an automorphic equidistribution of the family $\{A_\phi(p^{\mathbf{\kappa}}): \phi\in \mathcal{H}^\natural\}$ -- the vertical Sato-Tate law for Hecke-Maass forms. Now we explain a consequence of the equidistribution result.

Let $\mathfrak{S}_n$ be the symmetric group and let
$$T_0=\left\{(e^{i\theta_1},e^{i\theta_2},\ldots,e^{i\theta_n})\in (S^{1})^n:e^{i(\theta_1+\theta_2+\cdots+\theta_n)}=1\right\}.$$
We define two measures $d\mu_{\mathrm{ST}}$ and $d\mu_p$ on $T_0/\mathfrak{S}_n$ whose integration formulas (over $[0,2\pi]^{n-1}$) are given by
$$
d\mu_{\mathrm{ST}}=\frac1{n!}\frac1{(2\pi)^{n-1}}\prod_{1\le i<j\le n} |e^{\ic\theta_i}-e^{\ic\theta_j}|^2\, d\theta_1\cdots d\theta_{n-1}
$$
and
$$
d\mu_p=\frac1{n!} \prod_{i=2}^n \frac{1-p^{-i}}{1-p^{-1}}  \cdot \prod_{1\le i< j\le n} \left|\frac{e^{\ic \theta_i}- p^{-1}e^{\ic\theta_j}}{e^{\ic \theta_i}- e^{\ic\theta_j}}\right|^{-2} \cdot \frac1{(2\pi)^{n-1}}\, d\theta_1\cdots {d} \theta_{n-1}.
$$

Define $S_{\mathbf{\kappa}}(1,\cdots,1)$ by taking $x_i\to 1$.
By \cite[Lemma 7.1 (2)]{lw}, we have for any $X\ge 1$ and $\mathbf{\kappa} \in \N_0^{n-1}$,
\begin{equation}\label{new eq 1}
\max_{|x_i|\le X,\, \forall \, i} |S_{\mathbf{\kappa}}(x_1,\cdots, x_n)|\le X^{\|\mathbf{\kappa}\|} S_{\mathbf{\kappa}}(1,\cdots, 1)\le X^{\|\mathbf{\kappa}\|} (1+|\mathbf{\kappa}|)^{n^2-n}.
\end{equation}
A consequence of Matz and Templier's work on the vertical Sato-Tate is the following,  cf. \cite[Corollary 3.3]{lnw}.
\begin{lemma}\label{trace formula}  Let $\mathbf{\kappa}=(\kappa_1,\cdots,\kappa_{n-1})\in \N_0^{n-1}$, $\mathcal{H}_t$ and $A_\phi(p^\mathbf{\kappa})=A_\phi (p^{\kappa_1},\cdots,p^{\kappa_{n-1}})$ be defined as above, cf. \eqref{eqHd}, \eqref{eqApk} and \eqref{eq-schur}. Then for any $\ell,m\in \N$,
\begin{align*}
&\frac1{|\mathcal{H}_t|}\sum_{\phi
\in\mathcal{H}_t}\prod_{p^{u_p}\|\ell,p^{v_p}\|m}
A_\phi(p^{\kappa_1},\ldots,p^{\kappa_{n-1}})^{u_p}
\overline{A_\phi(p^{\kappa_1},\ldots,p^{\kappa_{n-1}})}^{v_p}\nonumber\\
&=\prod_{p^{u_p}\|\ell,p^{v_p}\|m}\int_{T_0/\mathfrak{S}_n}
S_{\mathbf{\kappa}}^{u_p}\overline{S_{\mathbf{\kappa}}^{v_p}}d\mu_p
+O\bigg(t^{-1/2}
\prod_{p^{u_p}\|\ell,p^{v_p}\|m} \big(c_{\mathbf{\kappa}}p^{L\|\mathbf{\kappa}\|}\big)^{u_p+v_p}\bigg)
\end{align*}
where
$L$ is a positive constant, $1\le c_{\mathbf{\kappa}}:=
(1+|\mathbf{\kappa}|)^{n^2-n}$.
\end{lemma}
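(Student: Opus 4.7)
The plan is to view the left-hand side as the average over $\phi\in\mathcal{H}_t$ of a multiplicative test function on the Satake parameters at the (finite) set of primes dividing $\ell m$, and then invoke the quantitative automorphic Plancherel density theorem packaged in \cite[Corollary 3.3]{lnw}. Using \eqref{eqApk} to write $A_\phi(p^\mathbf{\kappa})=S_\mathbf{\kappa}(\alpha_\phi(p))$ at every prime, the inner sum rewrites as $F\bigl((\alpha_\phi(p))_{p\mid\ell m}\bigr)$, where
\[
F\bigl((x_p)_p\bigr)\;=\;\prod_{p^{u_p}\|\ell,\ p^{v_p}\|m}S_\mathbf{\kappa}(x_p)^{u_p}\,\overline{S_\mathbf{\kappa}(x_p)}^{v_p}
\]
is a product test function on $\prod_{p\mid\ell m}(T_0/\mathfrak{S}_n)$. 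Notice that $F$ is a finite product of polynomial functions of degree $u_p+v_p$ in the Schur polynomial $S_\mathbf{\kappa}$.

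With this setup the second step is a direct application of the Matz--Templier vertical Sato--Tate law with explicit error, which is exactly what \cite[Corollary 3.3]{lnw} records. That result asserts that the empirical joint distribution of the tuples $(\alpha_\phi(p))_{p\mid \ell m}$ as $\phi$ ranges over $\mathcal{H}_t$ approximates the product of local Plancherel measures $\prod_p d\mu_p$, with an error of size $t^{-1/2}$ times a natural norm of the test function. Since both $F$ and the Plancherel measure factor over primes, the main term of the average is
\[
\prod_{p^{u_p}\|\ell,\ p^{v_p}\|m}\int_{T_0/\mathfrak{S}_n}S_\mathbf{\kappa}^{u_p}\,\overline{S_\mathbf{\kappa}}^{v_p}\,d\mu_p,
\]
matching the stated main term.

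Finally, to control the error I would bound $F$ pointwise via \eqref{new eq 1}. The known bound towards Ramanujan gives $\max_{i}|\alpha_{\phi,i}(p)|\le p^L$ for some absolute $L>0$, so by \eqref{new eq 1}
\[
|S_\mathbf{\kappa}(\alpha_\phi(p))|\;\le\;(1+|\mathbf{\kappa}|)^{n^2-n}\,p^{L\|\mathbf{\kappa}\|}\;=\;c_\mathbf{\kappa}\,p^{L\|\mathbf{\kappa}\|}.
\]
Multiplying these local bounds produces exactly the factor $\prod_p\bigl(c_\mathbf{\kappa}\,p^{L\|\mathbf{\kappa}\|}\bigr)^{u_p+v_p}$ appearing in the claimed error term. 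The genuine difficulty of this lemma — the quantitative Plancherel equidistribution together with the non-trivial control of its error in terms of a norm of the test function — is entirely absorbed into the cited \cite[Corollary 3.3]{lnw}; once that is quoted, the remainder of the proof is an algebraic rearrangement combined with the uniform Schur polynomial bound \eqref{new eq 1}.
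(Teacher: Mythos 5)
Your proposal matches the paper exactly in substance: the paper offers no proof of this lemma at all, presenting it as a direct quotation of \cite[Corollary 3.3]{lnw} (itself a packaging of the Matz--Templier equidistribution), which is precisely where your argument also places all of the real content. The only caveat is your gloss on the error term: in the cited corollary the factor $\big(c_{\mathbf{\kappa}}p^{L\|\mathbf{\kappa}\|}\big)^{u_p+v_p}$ comes from the geometric side of the trace formula (the size of the support of the relevant Hecke operators after applying the Littlewood--Richardson decomposition), not from a pointwise Ramanujan-type bound on $|S_{\mathbf{\kappa}}(\alpha_\phi(p))|$ as you suggest --- but since both you and the paper ultimately defer to the citation, this does not affect the validity of the derivation.
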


The product of two Schur polynomials $S_{\mathbf{\kappa}}$ and $S_{\mathbf{\kappa}'}$ may be evaluated with the Littlewood-Richardson rule:
\begin{equation}\label{lr}
S_{\mathbf{\kappa}} S_{\mathbf{\kappa}'} =S_{\mathbf{\kappa}} \cdot S_{\mathbf{\kappa}'} = \sum_{\mathbf{\xi}} d_{\mathbf{\kappa}\mathbf{\kappa}'}^{\mathbf{\xi}} S_{\mathbf{\xi}}
\end{equation}
where the $d_{\mathbf{\kappa}\mathbf{\kappa}'}^{\mathbf{\xi}}$'s are nonnegative integers and the summation runs over $\mathbf{\xi}\in \N_0^{n-1}$ satisfying $\|\mathbf{\xi}\|\le \|\mathbf{\kappa}\| + \|\mathbf{\kappa}'\|$ and $\|\mathbf{\xi}\|\equiv \|\mathbf{\kappa}\| + \|\mathbf{\kappa}'\|$ mod $n$. (Recall that $\|\mathbf{\kappa}\|:=\sum_i (n-i)\kappa_i$.)
Moreover $\{S_{\mathbf{\kappa}}\}$ form an orthonormal set under the inner product induced by the measure $d\mu_{\mathrm{ST}}$,
\begin{eqnarray}\label{2.4 in lnw}
\langle { S}_{\mathbf{\kappa}}, { S}_{\mathbf{\kappa}'}\rangle &=&
\int_{[0,2\pi]^{n-1}} { S}_{\mathbf{\kappa}}(\ul{\theta}) \overline{{ S}_{\mathbf{\kappa}'}(\ul{\theta})} d\mu_{\mathrm{ST}}
= \delta_{\mathbf{\kappa}=\mathbf{\kappa}'}.
\end{eqnarray}
As well,  by \cite[Proposition 7.4 (1)]{lw} we have
\begin{align*}
\int_{T_0/\mathfrak{S}_n}
S_{\mathbf{\kappa}}d\mu_p=\prod_{i=1}^{n-1}(1-p^{-i})\cdot
\sum_{\mathbf{\eta}\in\mathbb{N}_0^{n-1}}
d_{\mathbf{\kappa}\mathbf{\eta}}^{\mathbf{\eta}}\cdot p^{-\|\mathbf{\eta}\|}
\end{align*}
where the sum over $\mathbf{\eta}$ is supported on $|\mathbf{\eta}|\ge\|\mathbf{\kappa}\|/n$ and with \eqref{new eq 1} and \eqref{2.4 in lnw},
$$
0\le d_{\mathbf{\kappa}\mathbf{\eta}}^{\mathbf{\eta}}
=\int_{T_0/\mathfrak{S}_n}S_{\mathbf{\kappa}}|S_{\mathbf{\eta}}|^2d\mu_{\mathrm{ST}}
\le(1+|\mathbf{\kappa}|)^{(n^2-n)}.
$$
Consequently, for $\|\kappa\|\neq 0$ we have
\begin{align}\label{squarefree main term}
& \Big|\int_{T_0/\mathfrak{S}_n} S_{\mathbf{\kappa}}d\mu_p\Big|\nonumber\\
&\le (1+|\mathbf{\kappa}|)^{(n^2-n)} \prod_{i=1}^{n-1}(1-p^{-i}) \max_{\sum_i\eta_i = \lceil\frac{\|\kappa\|}n\rceil}\Big( \prod_{1\le i\le n-1}\sum_{\ell\ge \eta_i} p^{-i \ell}\Big)\nonumber\\
&\le (1+|\mathbf{\kappa}|)^{(n^2-n)}\max_{|\eta| = \lceil\frac{\|\kappa\|}n\rceil}p^{-\|\eta\|}\nonumber\\
&\le (1+|\mathbf{\kappa}|)^{(n^2-n)}p^{-1}
\end{align}
where $\lceil x\rceil$ denotes the smallest integer greater than or equal to $x$. (Note $|\eta|\le \|\eta\|$.)

By Cauchy-Schwarz's inequality and \eqref{new eq 1}, we have
\begin{align}\label{new eq}
 \sum_{\|\mathbf{\xi}\|\le n|\mathbf{\kappa}|}
(d_{\mathbf{\kappa}\mathbf{\kappa}'}^{\mathbf{\xi}})^2
&=\langle S_{\mathbf{\kappa}}S_{\mathbf{\kappa}'},S_{\mathbf{\kappa}}S_{\mathbf{\kappa}'} \rangle\nonumber\\
&\le S_{\mathbf{\kappa}}(1,\ldots,1)S_{\mathbf{\kappa}'}(1,\ldots,1)
\langle S_{\mathbf{\kappa}},S_{\mathbf{\kappa}}\rangle^{1/2}
\langle S_{\mathbf{\kappa}'},S_{\mathbf{\kappa}'}\rangle^{1/2}\nonumber\\
&\le ((1+|\mathbf{\kappa}|)(1+|\mathbf{\kappa}'|))^{n^2-n}= c_{\mathbf{\kappa}}c_{\mathbf{\kappa}'}.
\end{align}

We need an arithmetic function and a result from \cite{lwu}.
\begin{lemma}\label{lem-a}
Let $2\le P< Q\le 2P$, $j\ge 1$ and $n\ge 1$. Define
$$a_j(n)=a_j(n;P,Q)
=|\left\{(p_1,\ldots,p_j):p_1\cdots p_j=n,\ P<p_1,\ldots,p_j\le Q\right\}|.$$
For any $d>0$, $\sum_n a_j(n) d^{\Omega(n)}/n \ll (3d/\log P)^j$; moreover,
\begin{eqnarray*}
\sum_n a_j(n^2) \frac{d^{\Omega(n)}}{n^2} &\le & \delta_{2|j} \Big(\frac{3dj}{P\log P}\Big)^{j/2} \\
{\sum_n}^\natural  a_j(n) \frac{d^{\Omega(n)}}{n} &\le &  \Big(\frac{12d^2j}{P\log P}\Big)^{j/2}\Big\{1+ \Big(\frac{j\log P}{54 P}\Big)^{j/6}\Big\} \\
{\sum_m}^\flat \mathop{{\sum}^\natural}_{(m,n)=1}  a_j(mn) \frac{d^{\Omega(mn)}}{m^2n} &\le &  \Big(\frac{48d^2j}{P\log P}\Big)^{j/2}\Big\{1+ \Big(\frac{20 j\log P}{P}\Big)^{j/6}\Big\}
\end{eqnarray*}
where $\Omega(n)$ counts the number of (not necessarily distinct) prime divisors, $\delta_{2|j}=1$ if $2|j$ or $0$ otherwise, $\sum^\flat$ and  $\sum^\sharp$ run over squarefree and squarefull integers respectively.
\end{lemma}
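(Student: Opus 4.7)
The plan is to exploit the fact that $a_j$ is supported on integers with $\Omega(\cdot)=j$, so each factor $d^{\Omega(n)}$ reduces to a fixed power of $d$ and pulls out of the sum. Each bound then becomes a weighted count of ordered $j$-tuples $(p_1,\ldots,p_j)$ of primes in $(P,Q]$ subject to a constraint on $n=p_1\cdots p_j$. The starting identity is
$$
\sum_n a_j(n)n^{-s}=\Bigl(\sum_{P<p\le Q}p^{-s}\Bigr)^j.
$$
I set $\sigma_k:=\sum_{P<p\le Q}p^{-k}$; Mertens and the prime number theorem yield $\sigma_1\ll 1/\log P$ and $\sigma_k\ll 1/(P^{k-1}\log P)$ for $k\ge 2$.

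\emph{First bound.} Pull out $d^j$, take $s=1$, and apply Mertens.

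\emph{Second bound.} The condition $n^2=p_1\cdots p_j$ forces $j=2m$ even (hence $\delta_{2\mid j}$) and each distinct prime to appear an even number of times in the tuple; moreover $\Omega(n)=m$, so $d^{\Omega(n)}=d^m$ pulls out. I bound the weighted count of such tuples by first choosing one of the $(2m-1)!!$ pairings of the $2m$ positions and then demanding that the two primes in each pair coincide, giving $\sum_n a_{2m}(n^2)/n^2\le (2m-1)!!\,\sigma_2^m$. Using $(2m-1)!!\le m^m$ and $\sigma_2\le 3/(P\log P)$ yields the stated bound.

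\emph{Third bound.} For squarefull $n$ every distinct prime in the tuple appears at least twice. I encode this with the generating function
$$
G(y)=\prod_{P<p\le Q}\bigl(e^{y/p}-y/p\bigr),
$$
so that $\sum_n^\natural a_j(n)/n=j!\,[y^j]\,G(y)$. Expanding $\log G(y)=\sum_{e\ge 2}\sigma_e y^e/e!+(\text{higher cumulants})$ and extracting $[y^j]$ rewrites the sum as a weighted sum over partitions $\lambda\vdash j$ with all parts $\ge 2$. The partition $(2,2,\ldots,2)$ reproduces the main term already estimated in the second bound. Swapping three parts of size $2$ for two parts of size $3$ preserves $j$ but multiplies the contribution by a factor $\sigma_3^2/\sigma_2^3\asymp \log P/P$, and at most $j/6$ such swaps are possible, producing the correction of shape $(j\log P/P)^{j/6}$; the explicit constants $12$ and $54$ emerge after Stirling-type estimates on the multinomial coefficients.

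\emph{Fourth bound.} Use the coprimality identity $a_j(mn)=\sum_{j_1+j_2=j}\binom{j}{j_1}a_{j_1}(m)a_{j_2}(n)$ to split the sum into a squarefree part (for which $\sum_m^\flat a_{j_1}(m)/m^2\le\sigma_2^{j_1}$, since squarefree $m$ forces distinct primes each weighted $1/p^2$) and a squarefull part (handled by the third bound). Summing over $j_1+j_2=j$, controlling $\binom{j}{j_1}$ by Stirling, and combining the resulting estimates gives the desired form, with the constants $48$ and $20$ capturing the coupling between the two pieces.

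The main obstacle is the combinatorial bookkeeping in the third and fourth bounds: tracking which partitions of $j$ dominate, uniformly controlling the multinomial coefficients $j!/\prod_q e_q!$ and $\binom{j}{j_1}$ by elementary means ($(2m-1)!!\le m^m$, Stirling, and refined Mertens-type estimates for $\sigma_k$), and squeezing out the precise numerical constants $12,54,48,20$ claimed in the statement.
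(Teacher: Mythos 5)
The paper offers no proof of this lemma: it is imported wholesale from Lau and Wu \cite{lwu} (``We need an arithmetic function and a result from \cite{lwu}''), so there is no in-paper argument to compare against. Judged on its own terms, your outline follows what is essentially the only natural route, and the structural ingredients are all correct: $a_j$ is supported on integers with $\Omega(n)=j$, so $d^{\Omega(\cdot)}$ is constant on the support; the first bound is Mertens; the matching argument $\sum_n a_{2m}(n^2)/n^2\le(2m-1)!!\,\sigma_2^m$ with $(2m-1)!!\le m^m$ (AM--GM) gives the second bound, in fact in a form slightly stronger than stated; the identity $\sum_n^\natural a_j(n)/n=j!\,[y^j]\prod_{P<p\le Q}(e^{y/p}-y/p)$ is the right generating function; and the coprime splitting $a_j(mn)=\sum_{j_1+j_2=j}\binom{j}{j_1}a_{j_1}(m)a_{j_2}(n)$ together with $\sum_m^\flat a_{j_1}(m)/m^2\le\sigma_2^{j_1}$ (and dropping the coprimality condition, which only increases a sum of nonnegative terms) correctly reduces the fourth bound to the third at the cost of a factor $\le 2^j$ --- which is exactly where $12\mapsto 48$ comes from.

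The one step that would not survive as written is the extraction of $[y^j]$ in the third bound. The coefficients of $\log(e^x-x)$ are not all nonnegative (the $x^4$ coefficient equals $1/24-1/8=-1/12$), so expanding $\log G$ does not yield a genuine nonnegative sum over partitions; moreover your analysis only accounts for parts $2$ and $3$, and the contributions of parts $\ge 4$, the polynomially many partitions, and the factor $(1+j/6)$ from summing the geometric-type ratios all still have to be absorbed into the constants. The clean repair is Rankin's trick: $G$ has nonnegative Taylor coefficients, so $j!\,[y^j]G\le j!\,y^{-j}G(y)$, and the elementary bound $e^x-x\le\exp\bigl(x^2/2+x^3e^x/6\bigr)$ for $x\ge0$ gives $G(y)\le\exp\bigl(y^2\sigma_2/2+y^3\sigma_3e^{y/P}/6\bigr)$; choosing $y$ of size $\sqrt{j/\sigma_2}$ (with a separate choice in the regime where $j\log P/P$ is large) produces the main term $(Cj\sigma_2)^{j/2}$ from the quadratic part and the correction factor $1+(cj\log P/P)^{j/6}$ from the cubic part. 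With that substitution your plan goes through, and what remains is only the numerical bookkeeping you already flagged; since the target constants $12$, $54$, $48$, $20$ leave exponential room over the optimal ones, this is routine.
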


\section{Proof of Theorem \ref{main theorem}}

Let $a_j(\cdot)$ be defined as in Lemma~\ref{lem-a}. Squaring out, we have
\begin{align*}
&\left|\sum_{P<p\le Q }b_p\frac{A_\phi(p^{\kappa_1},\ldots,p^{\kappa_{n-1}})}{p}\right|^{2j}\nonumber\\
&=\left(\sum_{P<p\le Q }b_p\frac{A_\phi(p^{\kappa_1},\ldots,p^{\kappa_{n-1}})}{p}\right)^j
\left(\sum_{P<p\le Q }\overline{b_p}\frac{\overline{A_\phi(p^{\kappa_1},\ldots,p^{\kappa_{n-1}})}}{p}\right)^j\nonumber\\
&=\left(\sum_{P^j<\ell\le Q^j}a_j(\ell)\frac{b_\ell}{\ell}\prod_{p^u\|\ell}
A_\phi(p^{\kappa_1},\ldots,p^{\kappa_{n-1}})^u\right) \nonumber\\
& \mbox{ }\phantom{===} \times
\left(\sum_{P^j<m\le Q^j}a_j(m)\frac{\overline{b_m}}{m}\prod_{q^v\|m}
\overline{A_\phi(p^{\kappa_1},\ldots,p^{\kappa_{n-1}})}^v\right)\nonumber\\
&=\sum_{P^j<\ell,m\le Q^j}a_j(\ell)a_j(m)\frac{b_\ell\overline{b_m}}{\ell m}\\
& \mbox{ }\phantom{===} \times
\prod_{p^{u_p}\|\ell,p^{v_p}\|m}
A_\phi(p^{\kappa_1},\ldots,p^{\kappa_{n-1}})^{u_p}
\overline{A_\phi(p^{\kappa_1},\ldots,p^{\kappa_{n-1}})}^{v_p}
\end{align*}
Averaging over $\phi\in \mathcal{H}_t$, it follows from Lemma~\ref{trace formula} that
\begin{align}\label{trace}
& \frac1{|\mathcal{H}_t|}\sum_{\phi
\in\mathcal{H}_t}\prod_{p^{u_p}\|\ell,p^{v_p}\|m} \cdots \\
&=\prod_{p^{u_p}\|\ell,p^{v_p}\|m}\int_{T_0/\mathfrak{S}_n}
S_{\mathbf{\kappa}}^{u_p}\overline{S_{\mathbf{\kappa}}^{v_p}}d\mu_p
+O\left(t^{-1/2}
\big(c_{\mathbf{\kappa}}Q^{L\|\mathbf{\kappa}\|}\big)^{2j}\right).\nonumber
\end{align}
Thus the left side of \eqref{large sieve} can be expressed as follows:
\begin{align}
\frac1{|\mathcal{H}_t|}\sum_{\phi
\in\mathcal{H}_t}\left|\sum_{P<p\le Q }b_p\frac{A_\phi(p^{\kappa_1},\ldots,p^{\kappa_{n-1}})}{p}\right|^{2j}
= M + E.
\end{align}
The error term $E$ is
\begin{align}\label{contributio of error}
&\ll t^{-1/2}
\big(c_{\mathbf{\kappa}}Q^{L\|\mathbf{\kappa}\|}\big)^{2j}\sum_{P^j<\ell,m\le Q^j}a_j(\ell)a_j(m)\frac{b_\ell\overline{b_m}}{\ell m}\nonumber\\
&\ll t^{-1/2}
\big(c_{\mathbf{\kappa}}Q^{L\|\mathbf{\kappa}\|}\big)^{2j} \left(\frac{3B}{\log P}\right)^{2j}
\end{align}
by Lemma~\ref{lem-a}.

Next we evaluate the main term
\begin{align}\label{main term}
M& = \sum_{P^j<\ell,m\le Q^j}a_j(\ell)a_j(m)\frac{b_\ell\overline{b_m}}{\ell m}\prod_{p^{u_p}\|\ell,p^{v_p}\|m}\int_{T_0/\mathfrak{S}_n}
S_{\mathbf{\kappa}}^{u_p}\overline{S_{\mathbf{\kappa}}^{v_p}}d\mu_p.
\end{align}
Write $\ell =\ell_1\ell'$ and $m=m_1m'$ such that $\ell_1m_1$ is squarefree, $\ell'm'$ is squarefull and $(\ell_1m_1,\ell'm')=1$.\footnote{The decomposition is unique. Assume  $\ell =\ell_1\ell'=\ell_2\ell''$ and $m=m_1m'=m_2m''$ are two such decomposition.  Every positive integer  decomposes uniquely into a product of a squarefree integer and a squarefull integer.  From $(\ell_1m_1)(\ell'm')=(\ell_2m_2)(\ell'' m'')$,  we get $(*)$: $\ell_1m_1=\ell_2m_2$ and $\ell'm'=\ell''m''$. As $\ell_1m_1$ is squarefree, we have $(\ell_1,m_1)=1$; with $(\ell_1m_1,\ell'm')=1$, we infer $(\ell_1,m)=1$. So $(\ell_1,m_2)=1$, and $(\ell_2, m_1)=1$ by symmetry. By $(*)$, $\ell_1=\ell_2$ and $m_1=m_2$.}  (Note $\ell_1m_1=1$ when $\ell m$ is squarefull.) Set $h=\ell_1m_1$ and $r= \ell' m'$.  We split the product over prime divisors of $\ell m$ in \eqref{main term} into a product of two pieces over prime divisors of $\ell_1m_1$ and $\ell'm'$ respectively:
\begin{align*}
\prod_{p^{u_p}\|\ell,p^{v_p}\|m}\cdots
=
\prod_{p^{u_p}\|\ell_1,p^{v_p}\|m_1}\int_{T_0/\mathfrak{S}_n}
S_{\mathbf{\kappa}}^{u_p}\overline{S_{\mathbf{\kappa}}^{v_p}}d\mu_p
\prod_{p^{u_p}\|\ell',p^{v_p}\|m'}\int_{T_0/\mathfrak{S}_n}
S_{\mathbf{\kappa}}^{u_p}\overline{S_{\mathbf{\kappa}}^{v_p}}d\mu_p.
\end{align*}
Inside the second product, we invoke the trivial bound \eqref{new eq 1} and for the first product, (as $\ell_1m_1$ is squarefree) we have $u_p+v_p=1$ and thus apply \eqref{squarefree main term}. This leads to
\begin{align*}
&\Big|\prod_{p^{u_p}\|\ell,p^{v_p}\|m}\int_{T_0/\mathfrak{S}_n}
S_{\mathbf{\kappa}}^{u_p}\overline{S_{\mathbf{\kappa}}^{v_p}}d\mu_p\Big|\\
& \le
(1+|\mathbf{\kappa}|)^{\Omega(\ell'm') (n^2-n)} \prod_{p^{u_p}\|\ell_1,p^{v_p}\|m_1}(1+|\mathbf{\kappa}|)^{n^2-n}p^{-1} \\
&\le (1+|\mathbf{\kappa}|)^{2j(n^2-n)} h^{-1},
\end{align*}
and
\begin{align*}
|M|
&\le (1+|\mathbf{\kappa}|)^{2j(n^2-n)} \sum_{P^j<\ell_1\ell',m_1m'\le Q^j}a_j(\ell_1\ell')a_j(m_1m')\frac{\big|b_{\ell_1\ell'}\overline{b_{m_1m'}}\big|}{(\ell_1m_1)^2\ell' m'}
\nonumber\\
&\le (1+|\mathbf{\kappa}|)^{2j(n^2-n)}B^{2j}\sideset{}{^\flat}\sum_{h}\sideset{}{^\natural}\sum_{r}\frac1{h^2r} \sum_{\substack{P^j<\ell_1\ell',m_1m'\le Q^j\\ \ell_1 m_1= h, \, \ell'm'=r}} a_j(\ell_1\ell')a_j(m_1m')
\nonumber\\
&\le (1+|\mathbf{\kappa}|)^{2j(n^2-n)}B^{2j}\sideset{}{^\flat}\sum_{h}\sideset{}{^\natural}\sum_{r}\frac{a_{2j}(hr)}{h^2r}\nonumber\\
&\ll (1+|\mathbf{\kappa}|)^{2j(n^2-n)}B^{2j}
\left(\frac{96j}{P\log P}\right)^j\left\{1+\left(\frac{40j\log P}{P}\right)^{j/3}\right\}
\end{align*}
where the implied constant is independent of $j$.

\section{Proof of Theorem \ref{Linnik}}

 Let $\delta \log t\le P\le(\log t)^{10}$ and write $ \mathcal{P}_P:= \mathcal{P}\cap (P, 2P]$. Define
$$
E(t;{P})=\left\{\phi\in \mathcal{H}_t:A_\phi(p^{\kappa_1},\ldots,p^{\kappa_{n-1}})=\lambda(p)\ \textrm{for}\
 p\in\mathcal{P}\cap (P, 2P]\right\}.
$$
As the Ramanujan Conjecture is open, we consider the exceptional set over each prime
$$
\mathcal{E}(t,p)=\left\{\phi\in \mathcal{H}_t:\log\max_{1\le i\le n}|\alpha_{\phi,i}(p)|>1\right\}
$$
whose size is under control. Indeed, analogously to Sarnak's bound for the $GL(2)$ Maass forms, we have $|\mathcal{E}(t,p)|\ll t^{d-c_0/\log p}$ where $c_0>0$ is a constant, cf. \cite[Theorem 7.3]{lw}. Hence
\begin{equation*}
\Big|\bigcup_{p\in\mathcal{P}_P}\mathcal{E}(t,p)\Big|\ll t^{d-c'/\log P}
\end{equation*}
for some constant $c'$. Set
$$
E^*(t;{P})=E (t;{P})\setminus\bigcup_{p\in\mathcal{P}_P}\mathcal{E}(t,p).
$$

It remains to prove that
$$
E^*(t;{P})\ll_{\delta,\mathbf{\kappa},\mathcal{P}} t^de^{-C{\log t/\log_2t}}
$$
for all $t>T_0$, where $T_0=T_0(\delta, \mathbf{\kappa},\mathcal{P})$ is a sufficiently large number.
We may assume
\begin{equation}\label{bound of lambda p}
|\lambda(p)|< e^{\|\mathbf{\kappa}\|}(1+|\mathbf{\kappa}|)^{n^2-n}
\end{equation}
for all $P\le p\le 2P$; otherwise the set $E(t;{P})$ is empty by \eqref{new eq 1}. Suppose $j\in \N$ is chosen such that
\begin{align}\label{j}
j \le \frac{P}{40 \log P}.
\end{align}
We apply Theorem \ref{main theorem} with
\begin{equation}\label{bp}
b_p=\begin{cases}
         \overline{\lambda(p)} &\textrm{if\ }p\in\mathcal{P}_P,\\
                  0 &\textrm{otherwise}.
                          \end{cases}
\end{equation}
Since $\overline{\lambda(p)} A_\phi(p^{\kappa_1},\ldots,p^{\kappa_{n-1}}) = |A_\phi(p^{\kappa_1},\ldots,p^{\kappa_{n-1}})|^2$ for $\phi \in E^*(t;{P})$, it follows that
\begin{align}\label{5}
&\sum\limits_{\phi\in E^*(t;{P})}\bigg|\sum\limits_{ p\in\mathcal{P}_P}\frac{|A_\phi(p^{\kappa_1},\ldots,p^{\kappa_{n-1}})|^2}{p}\bigg|^{2j} \\
&\le\sum\limits_{\phi\in \mathcal{H}_t}\Big|\sum\limits_{P<p\le2P}b_p
\frac{A_\phi(p^{\kappa_1},\ldots,p^{\kappa_{n-1}})}{p}\Big|^{2j}\nonumber\\
&\ll t^d\left(\frac{(B_1C_{\mathbf{\kappa}})^2 j}{P\log P}\right)^j
+t^{d-1/2}\left(\frac{B_1C_{\mathbf{\kappa}}Q^{L\|\mathbf{\kappa}\|}}{\log P}\right)^{2j}\nonumber
\end{align}
where $B_1=e^{\|\mathbf{\kappa}\|}(1+|\mathbf{\kappa}|)^{n^2-n}$ and $Q=2P$, in view of \eqref{bound of lambda p}.

The size of  $|A_\phi(p^{\kappa_1},\ldots,p^{\kappa_{n-1}})|^2$  is about $1$ on average. To see it, we firstly deduce from \eqref{eqApk} and \eqref{lr} that
\begin{align}\label{key1}
 |A_\phi(p^{\kappa_1},\ldots,p^{\kappa_{n-1}})|^2
&=A_\phi(p^{\kappa_1},\ldots,p^{\kappa_{n-1}})
{A_\phi(p^{\kappa_{n-1}},\ldots,p^{\kappa_{1}})}\nonumber\\
&=1+\sum_{\substack{\mathbf{\xi}\neq\mathbf{0}\\\|\mathbf{\xi}\|\le n|\mathbf{\kappa}|}}
d_{\mathbf{\kappa}\mathbf{\kappa}^\iota}^{\mathbf{\xi}}A_\phi(p^{\xi_1},\ldots,p^{\xi_{n-1}})
\end{align}
where
$\mathbf{\kappa}^\iota=(\kappa_{n-1},\ldots,\kappa_1)$. (Then $\|\mathbf{\kappa}^\iota\|=n|\mathbf{\kappa}|-\|\mathbf{\kappa}\|$.)

Secondly, we exploit the oscillation among $A_\phi(p^{\xi_1},\ldots,p^{\xi_{n-1}})$ by Theorem~\ref{main theorem} (again).
For $\mathbf{\xi}=(\xi_1,\ldots,\xi_{n-1})$ with $1\le \|\mathbf{\xi}\|\le n|\mathbf{\kappa}|=|n\mathbf{\kappa}|$, we define
$$
E^{\mathbf{\xi}}(t;{P})=\bigg\{\phi\in \mathcal{H}_t:\bigg|\sum\limits_{\substack{P<p\le2P\\ p\in\mathcal{P}}}\frac{A_\phi(p^{\xi_1},\ldots,p^{\xi_{n-1}})}{p}\bigg|
\ge\frac{\Delta'}{\log P}\bigg\}
$$
where $\Delta' := \Delta/(2 c_{\mathbf{\kappa}} c_{\mathbf{\kappa}^\iota})< \Delta/2$.  Taking $b_p=  1$  if $p\in\mathcal{P}_P$ or $0$  otherwise,
we get from Theorem \ref{main theorem} with $C_\xi\le C_{n\kappa}$ that
\begin{align}\label{6}
|E^{\mathbf{\xi}}(t;{P})| & \ll t^d\left(\frac{C_{n\kappa}^2
j\log P}{{\Delta'}^2P}\right)^j
+t^{d-1/2}\left(\frac{C_{n\kappa} Q^{L\|\mathbf{\xi}\|}}{\Delta'}\right)^{2j}.
\end{align}
For $\phi \in E^*(t;{P})\backslash\bigcup\limits_{\substack{\mathbf{\xi}\neq\mathbf{0}
\\\|\mathbf{\xi}\|\le n|\mathbf{\kappa}|}} E^{\mathbf{\xi}}(t;{P})$, the inner sum (over $p$) in \eqref{5} is, by \eqref{key1},
 \begin{align}\label{7}
 \ge \sum\limits_{\substack{P<p\le 2P\\ p\in\mathcal{P}}}\frac{1}{p}-\sum_{\substack{\mathbf{\xi}\neq\mathbf{0}\\\|\mathbf{\xi}\|\le n|\mathbf{\kappa}|}}
d_{\mathbf{\kappa}\mathbf{\kappa}^\iota}^{\mathbf{\xi}}\bigg|\sum\limits_{\substack{P<p\le 2P\\ p\in\mathcal{P}}}\frac{A_\phi(p^{\xi_1},\ldots,p^{\xi_{n-1}})}{p}\bigg|\ge \frac{\Delta}{2\log P}
\end{align}
Here we have applied that $c_{\mathbf{\kappa}} c_{\mathbf{\kappa}^\iota} \Delta' \le \Delta/2$ and
\begin{align}\label{new eq 2}
\sum_{\substack{\mathbf{\xi}\neq\mathbf{0}\\\|\mathbf{\xi}\|\le n|\mathbf{\kappa}|}}
d_{\mathbf{\kappa}\mathbf{\kappa}^\iota}^{\mathbf{\xi}}
\le \sum_{\|\mathbf{\xi}\|\le n|\mathbf{\kappa}|}
(d_{\mathbf{\kappa}\mathbf{\kappa}^\iota}^{\mathbf{\xi}})^2
\le c_{\mathbf{\kappa}} c_{\mathbf{\kappa}^\iota}
\end{align}
by \eqref{new eq}.

Applying the lower bound \eqref{7} to the left-hand side of (\ref{5}), we thus infer
\begin{align*}
& \left(
\frac{\Delta}{2\log P}\right)^{2j}\Big|E^*(t;{P})\backslash\bigcup\limits_{\substack{\mathbf{\xi}\neq\mathbf{0}
\\\|\mathbf{\xi}\|\le n|\mathbf{\kappa}|}} E^{\mathbf{\xi}}(t;{P})\Big| \\
&\ll t^d\left(\frac{(B_1C_{\mathbf{\kappa}})^2 j}{P\log P}\right)^j
+t^{d-1/2}\left(\frac{B_1C_{\mathbf{\kappa}}Q^{L\|\mathbf{\kappa}\|}}{\log P}\right)^{2j}\nonumber
\end{align*}
and, together with \eqref{6},
\begin{align}\label{8}
|E^*(t;{P})|\ll
t^d\left(\frac{(B_1C_{n\kappa})^2 j\log P}{{\Delta'}^2P}\right)^j
+t^{d-1/2}\left(\frac{B_1 C_{n\kappa} Q^{L\|\mathbf{\kappa}\|}}{\Delta'}\right)^{2j}.
\end{align}
Recall $\delta \log t \le P \le (\log t)^{10}$.  Take
$$
j=\left\lceil\Delta^*{\frac{\log t}{\log P}} \right\rceil
$$
with
$$
\Delta^*=\min\Big(\frac{\delta}{40},  \frac{ \delta \Delta'^2 }{(2 B_1 C_{n\kappa})^2}, \frac1{8L\|\mathbf{\kappa}\|}\Big).
$$

Thus \eqref{j} is valid and the term inside the first bracket of \eqref{8} is bounded by $1/4$. Let $T_0$ be large enough so that $1<j< \delta (\log t)/ (\log_2 t)$ and the second term in the right-side of \eqref{8} is less than $t^{d-1/6}$ whenever $t>T_0$.  Then we conclude that
\begin{align*}
&|E^*(t;{P})|\ll t^de^{-C{\log t/\log_2 t}}
\end{align*}
for some constant $C>0$ depending on $\delta, \mathbf{\kappa}$ and $\mathcal{P}$. The proof  of Theorem \ref{Linnik}
is complete.

\section{Proof of Theorem \ref{sign change}}

The method of proof is the same as Theorem~\ref{Linnik}, starting with the set
\begin{align*}
F (t;{P})=\left\{\phi\in \mathcal{H}_t:\varepsilon_p \big(A_\phi(p^{\kappa_1},\ldots,p^{\kappa_{n-1}}) +A_\phi(p^{\kappa_{n-1}},\ldots,p^{\kappa_{1}})\big)>0 \ \textrm{for}\  p\in\mathcal{P}_P \right\}.
\end{align*}
The task is to evaluate
$$
F^*(t;{P})=F(t;{P})\setminus \bigcup_{p\in\mathcal{P}_P}\mathcal{E}(t,p).
$$
Using the positivity of $\varepsilon_p\big(A_\phi(p^{\kappa_1},\ldots,p^{\kappa_{n-1}})
+A_\phi(p^{\kappa_{n-1}},\ldots,p^{\kappa_{1}})\big)$ for  $\phi\in F^*(t;{P})$, we have
\begin{align*}
&|A_\phi(p^{\kappa_1},\ldots,p^{\kappa_{n-1}})
+A_\phi(p^{\kappa_{n-1}},\ldots,p^{\kappa_{1}})|^2\\
&\le 2e^{\|\mathbf{\kappa}\|}(1+|\mathbf{\kappa}|)^{n^2-n}
\varepsilon_p\big(A_\phi(p^{\kappa_1},\ldots,p^{\kappa_{n-1}})
+A_\phi(p^{\kappa_{n-1}},\ldots,p^{\kappa_{1}})\big)
\end{align*}
by \eqref{new eq 1}, and the analogue of \eqref{key1} follows from \eqref{lr} and \eqref{2.4 in lnw}:
\begin{align*}
&  |A_\phi(p^{\kappa_1},\ldots,p^{\kappa_{n-1}})
+A_\phi(p^{\kappa_{n-1}},\ldots,p^{\kappa_{1}})|^2 \\
&= 2A_\phi(p^{\kappa_1},\ldots,p^{\kappa_{n-1}})
{A_\phi(p^{\kappa_{n-1}},\ldots,p^{\kappa_{1}})} \\
& \mbox{ } \phantom{=} +
A_\phi(p^{\kappa_1},\ldots,p^{\kappa_{n-1}})^2
+A_\phi(p^{\kappa_{n-1}},\ldots,p^{\kappa_{1}})^2\\
&=  2(1+\delta_{\mathbf{\kappa},\mathbf{\kappa}^\iota})+\sum_{\substack{\mathbf{\xi}\neq\mathbf{0}\\|\mathbf{\xi}\|\le 2n|\mathbf{\kappa}|}}
(d_{\mathbf{\kappa}\mathbf{\kappa}}^{\mathbf{\xi}}
+2d_{\mathbf{\kappa}\mathbf{\kappa}^\iota}^{\mathbf{\xi}}
+d_{\mathbf{\kappa}^\iota\mathbf{\kappa}^\iota}^{\mathbf{\xi}})
A_\phi(p^{\xi_1},\ldots,p^{\xi_{n-1}})
\end{align*}
where $\delta_{\mathbf{\kappa},\mathbf{\kappa}^\iota}$ if $\mathbf{\kappa}=\mathbf{\kappa}^\iota$ or $0$ otherwise, and $\mathbf{\kappa}^\iota=(\kappa_{n-1},\ldots,\kappa_1)$.

\section{Proof of Theorem \ref{M-V conjecture}}

Let $\varepsilon \in(0, {10^{-10}}]$ be fixed. We need a short Euler product approximation for a bulk of $L(1,\phi)$'s.
\begin{proposition}\label{proposition1}
There are a constant $c'>0$ and a subset $E^1(z)$ of $\mathcal{H}_t$ such that
\begin{equation*}
L(1,\phi)=\left\{1+O\left(\frac{1}{\log_2 t}\right)\right\}\prod\limits_{p\le z}\prod\limits_{i=1}^n\left(1-\frac{\alpha_{\phi,i}(p)}{p}\right)^{-1}
\end{equation*}
uniformly for $\varepsilon\log t\le z\le(\log t)^{10}$ and all Maass forms $\phi\in \mathcal{H}_t\setminus E^1(z)$,  where the implied constant in the $O$-term is absolute and
$$
|E^1(z)|= O_\varepsilon\left( t^d\exp\left(-c'\frac{\log t}{(\log_2t)(\log_3t)(\log_4t)}\right)\right).
$$
\end{proposition}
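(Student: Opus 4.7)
The plan is to control the quantity
$$R(\phi) := \log L(1,\phi) - \sum_{p\le z}\sum_{i=1}^n\bigl(-\log(1-\alpha_{\phi,i}(p)/p)\bigr)$$
uniformly for $\phi$ outside a small exceptional set $E^1(z)$. First, I would set $z^\ast := (\log t)^{A}$ for a sufficiently large constant $A$ and remove from $\mathcal{H}_t$ the set $\bigcup_{p\le z^\ast}\mathcal{E}(t,p)$. The estimate $|\mathcal{E}(t,p)|\ll t^{d-c_0/\log p}$ from \cite[Theorem~7.3]{lw}, summed over $p\le z^\ast$, yields at most $t^d\exp(-c_1\log t/\log_2 t)$ excluded forms, which is well within the budget for $|E^1(z)|$. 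For every surviving $\phi$ and every prime $p\le z^\ast$ we have $|\alpha_{\phi,i}(p)|\le e$, so the Taylor expansion
$$-\sum_{i=1}^n\log(1-\alpha_{\phi,i}(p)/p) = \sum_{k\ge 1}\frac{1}{kp^k}\sum_{i=1}^n\alpha_{\phi,i}(p)^k$$
is absolutely convergent.

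Next I would split the problem at $z^\ast$. The contribution from primes $p>z^\ast$ to $\log L(1,\phi)$ is handled analytically, using the functional equation and Hadamard factorization of $L(s,\phi)$ together with a zero-density estimate in the family $\mathcal{H}_t$; following the blueprint of \cite{lwu} (adapted to $GL(n)$), this identifies $\log L(1,\phi)$ with $\sum_{p\le z^\ast}\sum_i-\log(1-\alpha_{\phi,i}(p)/p)$ up to an error $O(1/\log_2 t)$, outside an additional exceptional set whose size is compatible with the claim. What remains is to show that for most $\phi$,
\begin{equation*}
T(\phi) := \sum_{z<p\le z^\ast}\sum_{k\ge 1}\frac{1}{kp^k}\sum_{i=1}^n\alpha_{\phi,i}(p)^k = O\Big(\frac{1}{\log_2 t}\Big).
\end{equation*}
The $k\ge 2$ tail contributes $O(1/z)=O(1/\log t)$ by the pointwise bound $|\alpha_{\phi,i}(p)|\le e$, so only the $k=1$ main term $\sum_{z<p\le z^\ast}A_\phi(p,1,\ldots,1)/p$ requires genuine input.

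This last sum I would decompose dyadically into $O(\log_2 t)$ blocks $(P,2P]$ with $z\le P\le z^\ast$, and treat each block via Theorem~\ref{main theorem} with $\mathbf{\kappa}=(1,0,\ldots,0)$ and $b_p=1$. Applying Markov's inequality at the $2j$-th moment, with $j$ tuned so that $j\log P/P$ is of order $1/((\log_3 t)(\log_4 t))$ and a threshold of order $1/(\log_2 t)^2$ per block, the second term in \eqref{large sieve} dominates and the exceptional set per block is bounded by $t^d\exp(-c'\log t/((\log_2 t)(\log_3 t)(\log_4 t)))$; summing over the $O(\log_2 t)$ blocks preserves this budget. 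The principal obstacle is the analytic reduction in the previous paragraph: passing from the analytic object $\log L(1,\phi)$ to a short Dirichlet polynomial of length $z^\ast$ requires either a zero-density estimate or a uniform zero-free region near $s=1$ for the family $\{L(s,\phi):\phi\in\mathcal{H}_t\}$, which on $GL(n)$ relies on careful use of the functional equation, Stirling asymptotics for the gamma factors, and Hadamard factorization. The remaining pieces are essentially routine once Theorem~\ref{main theorem} is in hand.
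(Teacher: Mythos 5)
Your overall architecture is the same as the paper's: the paper's proof of Proposition \ref{proposition1} is itself a short reduction to \cite[Proposition 8.1]{lwu}, the only new ingredient being that, in the absence of the Ramanujan bound, one first restricts to the set $\mathcal{K}_t(\eta)=\{\phi\in\mathcal{H}_t:\log\max_i|\alpha_{\phi,i}(p)|\le 1/((\log_3t)(\log_4t))\ \forall\, p\le(\log t)^{1/\eta}\}$, whose complement has size $\ll t^d\exp\bigl(-c\eta\log t/((\log_2t)(\log_3t)(\log_4t))\bigr)$ by \cite[Theorem 7.3]{lw}. Where you deviate is in the choice of Ramanujan surrogate, and this is a genuine gap. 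Your threshold $|\alpha_{\phi,i}(p)|\le e$ exceeds $2$, so at $p=2$ the factor $(1-\alpha_{\phi,i}(2)/2)^{-1}$ can have arbitrarily large modulus and the expansion of $\log(1-\alpha_{\phi,i}(2)/2)$ need not converge; the asserted identity can then fail badly for such $\phi$, so they must be excluded by a threshold below $2$. More importantly, the paper's much tighter threshold $1+O(1/((\log_3t)(\log_4t)))$ is not a convenience: it is exactly what produces the factor $(\log_2t)(\log_3t)(\log_4t)$ in the exponent of $|E^1(z)|$ (the near-Ramanujan exceptional set is the bottleneck, not the large-sieve step), and it is needed downstream in Theorem \ref{M-V conjecture}, where one must have $(\log z)^{(\alpha'-1)n}=1+o(1)$.

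Your tuning of the moment/Markov step also does not deliver the stated bound. With a uniform per-block threshold $\delta\asymp(\log_2t)^{-2}$ and $j\log P/P\asymp 1/((\log_3t)(\log_4t))$, i.e. $j\asymp\log t/((\log_2t)(\log_3t)(\log_4t))$, Markov's inequality gives an exceptional set $\ll t^d\bigl(C^2j/(\delta^2P\log P)\bigr)^j$ whose base is $\asymp(\log_2t)^2/((\log_3t)(\log_4t))\to\infty$, so the estimate is vacuous; with that threshold the largest admissible $j$ is $\asymp\log t/(\log_2t)^3$, which yields an exceptional set larger than claimed. The standard repair is to let the per-block thresholds decay geometrically over the $O(\log_2t)$ dyadic blocks, say $\delta_{P_k}\asymp2^{-k/4}/\log_2t$, so that $\delta_{P_k}^2P_k\log P_k\gg\log t/\log_2t$ uniformly in $k$ and each block contributes $\exp(-c\log t/\log_2t)$, comfortably inside the budget. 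One must also impose $j\ll\log t/\log_2t$ with a small enough constant so that the term $t^{-1/2}\bigl(C_{\mathbf{\kappa}}Q^{L\|\mathbf{\kappa}\|}/\log P\bigr)^{2j}$ in \eqref{large sieve} remains $\le t^{-1/6}$ after dividing by $\delta^{2j}$; you do not check this constraint, and with your value of $j$ it is satisfied only because your $j$ is small, not by design. Finally, the analytic reduction of $\log L(1,\phi)$ to a Dirichlet polynomial of length $z^*$, which you rightly flag as the heavy input, is handled by the paper exactly as you propose, namely by importing the argument of \cite{lwu} wholesale, so there is no discrepancy on that point.
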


\begin{proof} We follow the same approach as in the proof of \cite[Proposition 8.1]{lwu}. A crucial difference is without the Ramanujan bound now, and thus we exclude the forms outside the set
$$
\mathcal{K}_t =\mathcal{K}_t(\eta): =\left\{\phi\in\mathcal{H}_t : \log\max_{1\le i\le n}|\alpha_{\phi,i}(p)|\le 1/(\log_3t)(\log_4t),\forall p\le (\log t)^{1/\eta}\right\}
$$
where $\eta>0$ is any number. The size of the exceptional set, i.e. $\mathcal{H}_t^-= \mathcal{H}_t\backslash \mathcal{K}_t$, is small:
\begin{equation}\label{exceptional number}
\mathcal{H}_t^- \ll t^d\exp\left(-c \frac{\eta \log t}{(\log_2t)(\log_3t)(\log_4t)}\right)
\end{equation}
for some constant $c>0$,  by \cite[Theorem 7.3]{lw} (see also \cite[(6.1)]{lw}).  We work on $\mathcal{K}_t$ with the argument in \cite{lwu} to complete the proof.
\end{proof}

Now we prove Theorem \ref{M-V conjecture}.
For $\phi\in \mathcal{H}_t\backslash E^1(z)$, we have
\begin{align*}
&|L(1,\phi)|\le\left\{1+O\left(\frac{1}{\log_2 t}\right)\right\}\prod\limits_{p\le z}\left(1-\frac{\alpha'}{p}\right)^{-n}\\
&\le\left\{1+O\left(\frac{1}{\log_2 t}\right)\right\}(e^\gamma\log z)^{\alpha'n}\\
&\le\left\{e^\gamma\bigg((e^{\gamma(1-1/\alpha')}\log z)^{\alpha'}+C_0(\log_2 t)^{\alpha'-1}\bigg)\right\}^n,
\end{align*}
where $C_0$ is an absolute constant and $\alpha'=\exp(1/(\log_3t)(\log_4t))$. Taking
\begin{align*}
z
&=e^{e^{-\gamma(1-1/\alpha')}(\log_2t+r-C_0(\log_2 t)^{\alpha'-1})^{1/\alpha'}}\\
&=e^{(1+O((\log_4t)^{-1})(\log_2t+r-C_0(\log_2 t)^{\alpha'-1})},
\end{align*}
the proof is complete for $F_t^+$. The case of $F_t^-$ is treated in the same fashion.

\vskip5mm
\noindent\textbf{Acknowledgments}. We would like to thank the referees for their reading and comments.


\emph{
Lau is supported by GRF (Project Code. 17302514 and 17305617) of the Research Grants Council of Hong Kong. Wang is supported by National Natural Science Foundation of China (Grant No. 11871344).}

\end{document}